\documentclass{article}
\usepackage{amsfonts}
\usepackage{amsmath}
\usepackage{sectsty}
\usepackage{latexsym}
\usepackage{amsbsy}
\usepackage{amssymb}
\usepackage{verbatim}
\usepackage{amsthm}
\usepackage{amsfonts}
\usepackage{amsmath}
\usepackage{makeidx}
\usepackage{graphicx}
\usepackage{layout}
\usepackage{pb-diagram}
\usepackage{amscd}
\usepackage{anysize}
\usepackage{tikz}
\usepackage{graphicx}
\usepackage{upgreek}
\usepackage[all,cmtip]{xy}
\usepackage{youngtab}
\usepackage{hyperref}
\usepackage{color}
\usepackage{mathrsfs}

\newcommand{\beq}{\begin{equation*}}
\newcommand{\eeq}{\end{equation*}}

\newcommand{\calP}{\mathcal{P}}

\newcommand{\calU}{\mathcal{U}}
\newcommand{\Z}{\mathbb{Z}}
\newcommand{\Q}{\mathbb{Q}}

\newcommand{\K}{\mathbb{K}}

\newcommand{\D}{\mathbb{D}}

\DeclareMathOperator{\Th}{Th}
\DeclareMathOperator{\Diff}{Diff}
\DeclareMathOperator{\Map}{Map}
\DeclareMathOperator{\Hom}{Hom}
\DeclareMathOperator{\Inv}{Inv}
\setcounter{MaxMatrixCols}{10}

\newtheorem{theorem}{Theorem}[section]

\newtheorem{definition}[theorem]{Definition}
\newtheorem{example}[theorem]{Example}

\newtheorem{lemma}[theorem]{Lemma}

\newtheorem{proposition}[theorem]{Proposition}
\newtheorem{remark}[theorem]{Remark}

\title{\Large\bf Involution on pseudoisotopy spaces and the space of nonnegatively curved metrics}
\author{Mauricio Bustamante\and Francis Thomas Farrell\and Yi Jiang \thanks{The third author's research is partially supported by NSFC 11571343 and NSFC 11801298.}}
\newcommand{\Addresses}{{
  \bigskip
  \footnotesize
  \textsc{Mauricio Bustamante}, \textsc{Department of Pure Mathematics and Mathematical Sciences, University of Cambridge, UK}\par\nopagebreak
 \texttt{bustamante.math@gmail.com}
  \medskip

  \textsc{Francis Thomas Farrell}, \textsc{Yau Mathematical Sciences Center,
  Tsinghua University, Beijing, China}\par\nopagebreak
  \texttt{farrell@math.binghamton.edu}
  \medskip

  \textsc{Yi Jiang}, \textsc{Yau Mathematical Sciences Center, Tsinghua
  University, Beijing, China}\par\nopagebreak
\texttt{yjiang117@mail.tsinghua.edu.cn}
}}
\date{}

\def\D{\underline{\calP}%
_{k}^{l,m}(M\times J^{l+m-d})}

\def\Dd{\underline{\calP}%
_{k}^{l,m}(M)}

\def\bd{\partial}

\begin{document}
\maketitle

\begin{abstract}
We prove  that certain involutions defined by Vogell and Burghelea-Fiedorowicz on the rational algebraic $K$-theory of spaces coincide. This gives a way to compute the positive and negative eigenspaces of the involution on rational homotopy groups of pseudoisotopy spaces from the involution on rational $S^{1}$-equivariant homology group of the free loop space of a simply-connected manifold. As an application, we give explicit dimensions of the open manifolds $V$ that appear in Belegradek-Farrell-Kapovitch's work for which the spaces of complete nonnegatively curved metrics on $V$ have nontrivial rational homotopy groups.

\begin{description}
\item[2010 Mathematics subject classification:] 19D10 (55N91)
\end{description}
\end{abstract}

\section{Introduction}
In this paper, all manifolds are smooth i.e. $C^{\infty}$, and any set of diffeomorphisms or Riemannian metrics is equipped with the smooth compact-open topology.

Given a compact smooth manifold $M$ possibly with boundary $\bd M$, a pseudoisotopy of $M$ is defined as a diffeomorphism $M\times [0,1]\to M\times [0,1]$ which fixes the subspace $M\times\{0\}\cup\partial M\times [0,1]$ pointwise. The space consisting of all pseudoisotopies of $M$ is denoted by $P(M)$. A feature of $P(M)$ is that it has an involution
$\iota:P(M)\rightarrow P(M)$ given essentially by reflection in the second coordinate (see Section \ref{InvPseudo} for definitions).
This involution plays an important role in understanding the homotopy type of the group $\Diff(M,\partial M)$ of diffeomorphisms of $M$ that fix $\bd M$ pointwise \cite[Proposition 2.1]{Hatcher78}.

There is also the \textit{stable pseudoisotopy space} $\mathscr{P}(M)$ which is defined as the colimit of the maps $P(M\times [0,1]^k)\to P(M\times [0,1]^{k+1})$ given by crossing a pseudoisotopy with the identity on $[0,1]$ (and smoothing corners).
A celebrated theorem of Igusa states that there is an isomorphism
\beq
\pi_{k}\mathscr{P}(M)\cong\pi_{k}P(M)
\eeq
when $\dim M>>k$. The involution on $P(M)$ can then be used to produce an involution
\beq
\iota^S:\pi_{*}\mathscr{P}(M)\to\pi_{*}\mathscr{P}(M)
\eeq
on $\pi_{*}\mathscr{P}(M)$ (see Section \ref{InvPseudo} for details).

The advantage of passing to the stable range, is that $\mathscr{P}(M)$ is related to Waldhausen's $K$-theory $A(M)$
of $M$ \cite{WaldhausenI1978}.  Moreover, the functor $A(M)$ can also be endowed with involutions which potentially serve to understand $\iota^S$. For example, given a $d-$spherical fibration $\eta$ over $M$(with a section), Vogell \cite{Vogelllong} defines an involution
\beq
\tau_{\eta}: A(M)\to A(M).
\eeq
It turns out that the involution corresponding to the sphere bundle associated to the stable tangent bundle of $M$ is compatible with $\iota^S$. The involution $\tau_{\varepsilon}$ corresponding to the trivial fibration $\varepsilon=M\times S^0$  will also play an important role in this work.

If one is willing to restrict only to $1$-connected manifolds $M$ and pass to rational homotopy groups, then $A(M)$ can further be studied via the rational $S^{1}-$equivariant homology $H_{*}^{S^{1}}(LM;\mathbb{Q})$ of the free loop space $LM$ of $M$ \cite{Waldhausenmfd, WaldhausenI1978, B, BF, Goodwillie85, Goodwillie86, Vogelllong, BF2, Lodder90, Lodder96, KS}. Note that these homology groups come equipped with a geometric involution obtained by ``reversing loops'' (see Section \ref{Section:A} for the definition.)

In this paper we establish formulas for the dimensions of the positive and negative eigenspaces of the involution $\iota^S$ on $\pi_{\ast}\mathscr{P}(M)\otimes\mathbb{Q}$ in terms of the involution $\tau_{\varepsilon}$ on the Waldhausen's $K$-theory $A(M)$ and also in terms of the involution on rational $S^{1}$-equivariant homology $H_{\ast}^{S^{1}}(LM,\ast;\mathbb{Q})$, when the smooth manifold $M$ is simply connected. To be more explicit, for a $\mathbb{Q}-$vector space $V$ with an involution $T$, let
\begin{equation*}
\Inv_{T}^{\sigma }V=\left\{ v\in V|T(v)=\sigma v\right\}
\end{equation*}%
where $\sigma =\pm $. Sometimes we omit $T$ from the notation if there is no risk of confusion.  Denote $\pi _{i}(-)\otimes\mathbb{Q}$ by $\pi _{i}^{\mathbb{Q}}(-)$. We now state our main result.

\begin{theorem}\label{formula}
Let $M$ be a simply-connected compact smooth manifold. Then for $i\geq 0$
\begin{eqnarray*}
\dim \Inv^{+}\pi _{i}^{\mathbb{Q}}\mathscr{P}(M) &=& \dim \Inv_{\tau_{\varepsilon\ast}}^{-}\pi_{i+2}^{\mathbb{Q}}A(M) \\
&=& \delta_{i}+\dim
\Inv^{+}H_{i+1}^{S^{1}}(LM,\ast ;\mathbb{Q}) \\
\dim \Inv^{-}\pi _{i}^{\mathbb{Q}}\mathscr{P}(M) &=&\dim \Inv_{\tau_{\varepsilon\ast}}^{+}\pi^{\mathbb{Q}}_{i+2}A(M)-\dim H_{i+2}(M;\mathbb{Q}) \\
&=&\dim
\Inv^{-}H_{i+1}^{S^{1}}(LM,\ast ;\mathbb{Q})-\dim H_{i+2}(M;\mathbb{Q})
\end{eqnarray*}
where $\delta_{i}=\left\{
      \begin{array}{ll}
        1, & \mbox{if }i\equiv3\mod4\mbox{;} \\
        0, & \mbox{otherwise.}
      \end{array}
    \right.$
\end{theorem}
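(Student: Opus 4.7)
The argument is essentially a bookkeeping exercise combining three inputs: Waldhausen's splitting of $A(M)$, the Burghelea--Fiedorowicz rational splitting of $A(M)$ for simply connected $M$, and the paper's main technical theorem asserting that the Vogell and Burghelea--Fiedorowicz involutions agree rationally.

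First I would reduce $\mathscr{P}(M)$ to $A(M)$ using Waldhausen's decomposition $A(M) \simeq Q(M_+)\times \mathrm{Wh}^{\mathrm{Diff}}(M)$, combined with the two-fold loop identification $\mathscr{P}(M) \simeq \Omega^2 \mathrm{Wh}^{\mathrm{Diff}}(M)$. Rationally this yields a splitting
\[
\pi_{i+2}^{\mathbb{Q}} A(M) \;\cong\; H_{i+2}(M;\mathbb{Q})\;\oplus\;\pi_{i}^{\mathbb{Q}} \mathscr{P}(M),
\]
since $\pi_{*}^{\mathbb{Q}} Q(M_+)\cong H_{*}(M;\mathbb{Q})$. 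The crucial technical point is that under this identification, $\tau_{V}$ restricts to the identity on the $Q(M_+)$ summand (so all of $H_{i+2}(M;\mathbb{Q})$ sits in $Inv^{+}_{\tau_{V*}}$), while on the $\mathrm{Wh}^{\mathrm{Diff}}(M)$ summand it agrees, after the double loop, with the \emph{negative} of Hatcher's pseudoisotopy involution $\iota$. This sign swap is what pairs $Inv^{+}\pi_{i}^{\mathbb{Q}}\mathscr{P}(M)$ with $Inv^{-}_{\tau_{V*}}\pi_{i+2}^{\mathbb{Q}}A(M)$; the first pair of equalities then follows by dimension counting.

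For the bridge to $H_{*}^{S^{1}}(LM,\ast;\mathbb{Q})$, I would invoke the Burghelea--Fiedorowicz/Goodwillie rational splitting for simply connected $M$,
\[
\pi_{i+2}^{\mathbb{Q}} A(M) \;\cong\; \pi_{i+2}^{\mathbb{Q}} A(\ast)\;\oplus\; H^{S^{1}}_{i+1}(LM,\ast;\mathbb{Q}),
\]
together with the paper's main theorem identifying $\tau_{V*}$ on the left with the geometric ``reverse loops'' involution on the right. Since $A(\ast)\otimes\mathbb{Q}\simeq K(\mathbb{Z})\otimes\mathbb{Q}$ has $\pi_{i+2}^{\mathbb{Q}}A(\ast)=\mathbb{Q}$ exactly when $i+2\equiv 1\pmod 4$ (equivalently $i\equiv 3\pmod 4$) and vanishes otherwise in positive degrees, this factor contributes a one-dimensional summand concentrated in a single eigenspace of the involution, accounting for the $\delta_{i}$ term on the $+$ side and its absence on the $-$ side. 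The remaining eigenspace dimensions then match those of the reverse-loops involution on $H_{i+1}^{S^{1}}(LM,\ast;\mathbb{Q})$.

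The principal obstacle will be verifying the sign discrepancy in the first step: that the pseudoisotopy involution, transported through the equivalence $\mathscr{P}(M)\simeq \Omega^{2}\mathrm{Wh}^{\mathrm{Diff}}(M)$, matches $-\tau_{V}$ rather than $+\tau_{V}$. This requires checking compatibility at the spectrum level of Waldhausen's splitting with involutions, which is historically subtle and depends on carefully chosen splittings (Vogell's involution is defined only up to certain choices, and one must see that these choices are consistent with Hatcher's geometric definition after looping twice). A secondary delicate point is pinning down which eigenspace the $A(\ast)$-summand lands in under the induced ``reverse loops'' action, so that the $\delta_{i}$ appears on the $+$ side in the final formula and not on the $-$ side; this needs the comparison theorem of the paper to be sharp on the $A(\ast)$-factor of the Burghelea--Fiedorowicz splitting, not merely on the $LM$-factor.
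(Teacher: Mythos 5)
Your proposal is correct and follows essentially the same route as the paper: the paper packages the first step as a split short exact sequence $0\to\pi^{s}_{i+2}(M_+)\to\pi_{i+2}A(M)\to\pi_{i}\mathscr{P}(M)\to 0$ (Lemma \ref{equivariantshortexact}) with image of $\alpha$ in $Inv^{+}_{\tau_{V*}}$ and $\beta$ anti-equivariant, which rationally is exactly your splitting $\pi_{i+2}^{\mathbb{Q}}A(M)\cong H_{i+2}(M;\mathbb{Q})\oplus\pi_{i}^{\mathbb{Q}}\mathscr{P}(M)$, and the second step is Theorem \ref{AthS1} plus the $K(\mathbb{Z})$ eigenspace computation, exactly as you describe. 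You correctly flag the two delicate points (the sign swap $\tau_V\leftrightarrow-\iota^{S}_{*}$ and the eigenspace of the $A(\ast)$-summand), which are precisely what Lemma \ref{equivariantshortexact} and the cited $K(\mathbb{Z})$ calculation of Waldhausen and Farrell--Hsiang supply.
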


\begin{remark}
By \cite{Dwyer}, $\dim\pi _{i}^{\mathbb{Q}}A(M)$ is finite for all $i$ if $M$ is simply-connected and $\pi_{i}(M)$ is finitely generated for each $i$.
\end{remark}

\begin{remark}
  It had already been suggested by Burghelea in his survey paper \cite[Theorem 3.5]{B89} that the involution on $\pi_{\ast}^{\mathbb{Q}}A(M)$ could be computed from the involution on $H^{\ast}_{S^{1}}(LM;\mathbb{Q})$.
\end{remark}

\begin{remark}
The comparison between the various involutions considered in this paper happen only at the level of homotopy groups. To our knowledge, a ``space'' version of the results of Vogell, namely that natural involutions on $A$-theory and pseudoisotopy theory are compatible, has not been carried out yet in the literature.
\end{remark}

Theorem \ref{formula} can be applied to the study the topology of spaces of Riemannian metrics with curvature bounds. In our case, we can draw conclusions about the rational homotopy groups of the space $\mathcal{R}_{K \geq 0}(V)$ of complete Riemannian metrics of nonnegative sectional curvature on an open manifold $V$.
A manifold $V$ supporting such metric contains a ``soul'' $S$, i.e. a totally geodesic closed submanifold of $V$ whose normal bundle is diffeomorphic to $V$. For a tubular neighborhood $N$ of a soul one can consider the map $i_N:P(\partial N)\to\Diff(N)$ given by extending a pseudoisotopy from a fixed collar neighborhood to a diffeomorphism of $N$. It turns out that non-trivial elements in the kernel of the map induced by $i_N$ on homotopy groups can be used to obtain non-trivial classes in $\pi_*\mathcal{R}_{K \geq 0}({\rm{int}}N)$ \cite[Theorem 1.1.]{BFK}. Understanding this kernel in the ``Igusa stable range'' involves surgery theory and algebraic $K$-theory of spaces. Indeed, Belegradek, Farrell and Kapovitch show that there exist manifolds $N$ as above of some dimension $m$ for which the kernel of $i_N$ contains elements of infinite order. However, their method does \textit{not} give an explicit $m$ for which this happens \cite[Remark 9.6]{BFK}. The motivation of this paper comes from trying to determine those dimensions. As will be shown in the last section, the problem reduces to being able to determine the dimensions of the positive and negative eigenspaces of the involution $\iota^S$ on the rational homotopy groups of the stable pseudoisotopy spaces of the manifolds in question, which can be done using our Theorem \ref{formula}.

The remaining parts of the paper are organized as follows. Sections 2 and 3 aim at proving Theorem \ref{formula}. In Section 2, we present the relation between the involution $\tau_{\varepsilon}$ on $A(M)$ and the geometric involution on $H_{\ast}^{S^{1}}(LM;\mathbb{Q})$. In Section 3, after reviewing Waldhausen and Vogell's work on the $K$-theory of spaces, we show the relation between the involution $\tau_{\varepsilon}$ on $A(M)$ and the involution on $\pi_{\ast}\mathscr{P}(M)$ , and then we prove Theorem \ref{formula}. In Section 4, we first calculate the involution on the rational $S^{1}$--equivariant homology groups for $LM$ when $M$ is the unit tangent bundle of an even dimensional sphere, and then apply this computation to the framework of \cite{BFK}.
\subsection*{Acknowledgements}
We thank Kristian Moi for many helpful comments and discussions. In particular, for spotting a mistake (and providing us with way to fix it) that appeared in Section 3 of  an earlier version of this paper. M.B. is grateful to Manuel Krannich for useful conversions about Section \ref{InvPseudo} of this article.
\section{Involutions on $\pi_*A(M)$ and $H_{\ast}^{S^{1}}(LM;\mathbb{Q})$}\label{Section:A}
Waldhausen \cite{Waldhausen100} defines the $K$-theory $A(M)$ of a space $M$ as the algebraic $K$-theory of the  category
$\mathcal{R}_{hf}(M)$ of retractive spaces over $M$, which is a category with cofibrations and weak equivalences. An object in $\mathcal{R}_{hf}(M)$ is a triple $(Y,r,s)$ where $r:Y\to M$ is a retraction, $s:M\to Y$ is a section to $r$ and $Y$ is a space with the homotopy type of a finite CW--complex relative to the subspace $s(M)$. Morphisms in $\mathcal{R}_{hf}(M)$ are continuous maps compatible with retractions and sections. Given a $d-$spherical fibration $\eta$ over $M$ (with a section), Vogell \cite{Vogelllong} considers Spanier-Whitehead duality in the categories of retractive spaces to define an involution $\tau_{\eta}:A(M)\to A(M)$.

In this section we establish Theorem \ref{AthS1}, which relates the involution $\tau_{\varepsilon}$ on $A(M)$ associated to the trivial fibration $M\times S^0\to M$,  with a geometric involution on $H_{\ast}^{S^{1}}(LM;\mathbb{Q})=H_{\ast}(ES^{1}\times_{S^{1}} LM;\mathbb{Q})$, where $ES^{1}\times_{S^{1}} LM$ is the Borel construction and $LM=\Map(S^1,M)$ is the free loop space  of $M$, with
$S^1$-action given by
$(z\cdot f)(w) = f(z\cdot w)$
for $z,w\in S^{1}\subset \mathbb{C}$ and $f:S^{1}\rightarrow M$. The geometric involution on $H_{i}^{S^{1}}(LM,\ast ;\mathbb{Q})$ is induced by the involution
\begin{equation*}
T: ES^{1}\times _{S^{1}}LM\rightarrow ES^{1}\times _{S^{1}}LM:[e,f]\longmapsto
\lbrack \overline{e},\overline{f}]
\end{equation*}
where $ES^{1}$ is modeled here by the infinite-dimensional sphere $S^{\infty}=\cup_{n}S^{2n-1}\subset\mathbb{C}^{\infty}=\cup_{n}\mathbb{C}^{n}$. The free action of $S^1$ on $ES^1$ is given by complex multiplication. Also $\overline{e}$ is the complex conjugate of $e\in ES^{1}\subset\mathbb{C}^{\infty }$ and $\overline{f}(x):=f(\overline{x})$ for $x\in S^{1}$.

Let $\ast\in M$ and let $H_{i}^{S^{1}}(LM,\ast;\mathbb{Q})=H_{i}(ES^{1}\times_{S^{1}}LM,ES^{1}\times_{S^{1}}\ast;\mathbb{Q})$, then the involution $T$ on the pair of spaces $(ES^{1}\times_{S^{1}}LM,ES^{1}\times_{S^{1}}\ast)$ induces an involution $T_{\ast}$ on $H_{i}^{S^{1}}(LM,\ast;\mathbb{Q})$. Recall that $A(-)$ is a functor from the category of continuous maps of topological spaces to itself \cite{WaldhausenI1978}. Since the constant map $M\rightarrow \ast$ induces a retraction $A(M)\rightarrow A(\ast)$, then the inclusion $\ast\rightarrow M$ induces a monomorphism $A(\ast)\rightarrow A(M)$. Furthermore, since the involution $\tau_{\varepsilon}$ is a natural transformation, it restricts to the involution on $A(\ast)$ and hence we have an involution $\tau_{\varepsilon\ast}$ on $\pi_{i}(A(M),A(\ast))$. Burghelea \cite{B}, Burghelea--Fiedorowicz \cite{BF}, Goodwillie \cite{Goodwillie85,Goodwillie86} and Waldhausen \cite{WaldhausenI1978} prove that $\pi_{i+1}(A(M),A(\ast ))\otimes\mathbb{Q}\cong H_{i}^{S^{1}}(LM,\ast ;\mathbb{Q})$ for all $i$. We can further obtain the following theorem.

\begin{theorem}\label{AthS1}
\label{InvonA}For a simply-connected compact manifold $M$, the isomorphism
\begin{equation*}
\pi _{i+1}(A(M),A(\ast ))\otimes\mathbb{Q}\cong H_{i}^{S^{1}}(LM,\ast ;\mathbb{Q})
\end{equation*}
can be chosen to be anti-equivariant with respect to the involutions $\tau_{\varepsilon\ast}$ and $T_{\ast}$. That is, there is an isomorphism $\pi _{i+1}(A(M),A(\ast ))\otimes\mathbb{Q}\rightarrow H_{i}^{S^{1}}(LM,\ast ;\mathbb{Q})$ such that the following diagram commutes
$$\xymatrix{\pi _{i+1}(A(M),A(\ast ))\otimes\mathbb{Q}\ar[r]\ar[d]_{\tau_{\varepsilon\ast}}&H_{i}^{S^{1}}(LM,\ast ;\mathbb{Q})\ar[d]_{-T_{\ast}}\\
\pi _{i+1}(A(M),A(\ast ))\otimes\mathbb{Q}\ar[r]&H_{i}^{S^{1}}(LM,\ast ;\mathbb{Q})}$$
\
\end{theorem}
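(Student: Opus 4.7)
The plan is to split the anti-equivariance statement into two comparisons. First, identify Vogell's involution $\tau_{V}$ on $A(M)$ with the Burghelea--Fiedorowicz involution $\tau_{BF}$ on a cyclic-bar-construction model for $A(M)$ (rationally). Second, use the well-known fact that $\tau_{BF}$, under the Burghelea--Fiedorowicz--Goodwillie splitting
$$\pi_{i+1}(A(M),A(\ast))\otimes\mathbb{Q}\cong H_{i}^{S^{1}}(LM,\ast;\mathbb{Q}),$$
corresponds (up to the sign that appears as $-T_{\ast}$) to reversal of $S^{1}$-direction, i.e.\ to the geometric involution $T_{\ast}$ obtained by complex conjugation on $ES^{1}$ and loop reversal on $LM$. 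In effect, the theorem is really the statement advertised in the abstract: the Vogell and Burghelea--Fiedorowicz involutions agree rationally, and the BF involution is tautologically the loop-reversal one on $H^{S^{1}}_{\ast}(LM)$.

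For the first comparison I would recall Vogell's construction: $\tau_{V}$ is defined on the Waldhausen category of retractive spaces over $M$ by a fiberwise Spanier--Whitehead duality, and it can be modelled by an involution on the simplicial set of partitions/$h$-cobordisms in which the ``up'' direction is exchanged with a dual ``down'' direction. The BF involution, on the other hand, comes from reversing the simplicial direction of the cyclic nerve of the Moore loop monoid $\Omega M$. The strategy is then to produce a common model, for instance the $S_{\bullet}$-construction applied to finitely dominated retractive spaces equipped with a reference to $\Omega M$, on which both involutions are defined, and then to construct an explicit simplicial homotopy (or natural zig-zag of weak equivalences, valid after rationalization) between them. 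The key input is that the cyclic structure on the bar construction encodes precisely the duality data used by Vogell, so duality-reversal and cyclic-reversal become homotopic.

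For the second comparison, once we are reduced to $\tau_{BF}$, the identification is geometric: the cyclic bar construction $N^{cy}\Omega M$ is $S^{1}$-equivalent to $LM$, and the simplicial reversal on $N^{cy}\Omega M$ corresponds under this equivalence to reversing the orientation of loops together with complex conjugation on $ES^{1}$, which is exactly $T$. The minus sign in $-T_{\ast}$ should drop out of the degree shift in the isomorphism $\pi_{i+1}(A(M),A(\ast))\otimes\mathbb{Q}\cong H_{i}^{S^{1}}(LM,\ast;\mathbb{Q})$: a suspension (or, equivalently, a Bott-type periodicity map) is hidden in the identification, and it twists the involution by $-1$. The main obstacle, and where I expect the real work to lie, is the first step: one must commit to specific models of $A(M)$ in which both Vogell's duality-theoretic involution and the BF reversal involution admit simultaneous descriptions, and then produce the natural transformation comparing them. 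Rationalization should be used only at the last moment so that one can invoke Dwyer-style finiteness and the BFW rational splitting to recognize that any self-map of $A(M)_{\mathbb{Q}}$ differing from $\tau_{V}$ by something trivial on the $LM$-summand must actually agree with the BF reversal.
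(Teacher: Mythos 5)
Your two-step decomposition---first identify $\tau_{V}$ with the Burghelea--Fiedorowicz involution $\tau_{BF}$, then relate $\tau_{BF}$ to the geometric involution $T_{\ast}$---is exactly the strategy the paper follows (Theorems~\ref{Theorem:AinvKinv} and~\ref{Theorem:KS1}). The execution you outline, however, diverges from the paper's at both steps and leaves genuine gaps.

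For the first step, the paper does not compare the two involutions on a cyclic-bar-construction or Moore-loop-monoid model, nor by finding a common $S_{\bullet}$-construction and interpolating by a simplicial homotopy. It linearizes via $A(|X|)\to K(\mathbb{Z}[G(X)])$ for the Kan loop group $G(X)$, defines $\tau_{BF}$ on $B\widehat{GL}(\mathbb{Z}[G(X)])^{+}$ by the conjugate-transpose anti-involution, defines Vogell's involution on a dualized model $\mathcal{D}K(\mathbb{Z}[G(X)])$ built from simplicial modules with linear duality data, and then produces an explicit map $\eta_{X}\colon K(\mathbb{Z}[G(X)])\to\mathcal{D}K(\mathbb{Z}[G(X)])$ intertwining the two involutions on the nose (Theorem~\ref{Invcoincide}); the hard work is showing a ``suspension'' map of simplicial endomorphism monoids is a homotopy equivalence. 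Your key claim---that the cyclic structure on the bar construction ``encodes precisely'' Vogell's duality data, so that duality reversal and cyclic reversal become homotopic---is the assertion that would need a proof; as written it is a hope, not an argument. Likewise the closing suggestion to rationalize at the last moment and deduce $\tau_{V}=\tau_{BF}$ from Dwyer finiteness together with agreement ``on the $LM$-summand'' is too soft: two involutions that agree on one eigenspace need not agree, so this cannot substitute for the explicit comparison.

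For the second step the gap is more serious. You attribute the $-1$ in $-T_{\ast}$ to a degree shift or Bott-type twist hidden in the Burghelea--Fiedorowicz--Goodwillie isomorphism. That is not where the sign comes from. The paper proves anti-equivariance by invoking Lodder's theorem identifying $Inv^{-}\widetilde{K}_{i+1}(\mathbb{Z}[G(X)])\otimes\mathbb{Q}$ with rational dihedral homology, Dunn's identification of dihedral homology with $H^{O(2)}_{i}(L|X|;\mathbb{Q})$, and the identity $H^{O(2)}_{i}(LM;\mathbb{Q})\cong Inv^{+}H^{S^{1}}_{i}(LM;\mathbb{Q})$. The flip from $Inv^{-}$ on K-theory to $Inv^{+}$ on $S^{1}$-homology is the substantive content of these theorems, not a bookkeeping artifact of a suspension. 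Without the dihedral/$O(2)$-equivariant input your plan stalls precisely where the sign must be pinned down.
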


The proof of Theorem \ref{AthS1} is given at the end of the section. The idea of the proof is the following. Let $X$ be a simply-connected simplicial set whose geometric realization $\left\vert
X\right\vert $ is homotopy equivalent to $M$. Let $K_{i}(\mathbb{Z}(G(X)))$ be the $i-$th $K$-theory group of the simplicial group ring $\mathbb{Z}[G(X)]$ where $G(X)$ is the Kan loop group of $X$ (see Section \ref{BFInvVInv} and \cite[p.276]{GJ}). Denote by $\widetilde{K}_{\ast}(\mathbb{Z}[G(X)])$ the cokernel of the natural map $K_{\ast }(\mathbb{Z})\rightarrow K_{\ast }(\mathbb{Z}[G(X)])$. Waldhausen \cite{WaldhausenI1978} has proved that there is an isomorphism
\begin{equation}\label{isomorphism:AK}
\pi _{i}(A(\left\vert X\right\vert ),A(\ast ))\otimes\mathbb{Q}\cong \widetilde{K}_{i}(\mathbb{Z}[G(X)])\otimes\mathbb{Q}
\end{equation}
and Burghelea\cite{B} and Goodwillie\cite{Goodwillie86} proved that
\begin{equation}\label{isomorphism:KS1}
\widetilde{K}_{i+1}(\mathbb{Z}[G(X)])\otimes\mathbb{Q}\cong H_{i}^{S^{1}}(L\left\vert X\right\vert ,\ast ;\mathbb{Q}).
\end{equation}
In \cite{BF2}, Burghelea and Fiedorowicz defined an involution $\tau_{BF\ast}$ on $\widetilde{K}_{\ast }(\mathbb{Z}[G(X)])$ (see Section \ref{Section:BFInv} for details). Our
strategy to prove Theorem \ref{InvonA} is to show that the isomorphisms (\ref{isomorphism:AK}) and (\ref{isomorphism:KS1}) can be chosen to be
equivariant and anti-equivariant, respectively, with respect to the involutions $\tau_{\varepsilon\ast}$, $\tau_{BF\ast}$ and $T_{\ast}$.

\subsection{The relation between the involutions on $A(\left\vert X \right\vert)$
and $K_{\ast }(\mathbb{Z}[G(X)])$}\label{BFInvVInv}
Throughout this section, let $X$ be a connected simplicial set (not necessarily simply connected). There is a linearization map $A(\left\vert X\right\vert )\rightarrow K(\mathbb{Z}[G(X)])$ which is known to be a rational homotopy equivalence \cite[Proposition 2.2]{WaldhausenI1978}. The content of this section is to compare the involutions $\tau_{\varepsilon}$ on $A(\left\vert X \right\vert)$ and $\tau_{BF}$ on $K(\Z[GX])$ under the linearization map. Note though, that the involutions $\tau_{BF}$ and $\tau_{\varepsilon}$ are of different nature: $\tau_{BF}$ is defined in terms of matrices and $\tau_{\varepsilon}$ is defined in the spirit of Spanier-Whitehead duality. Thus we should carry out this comparison in a rather indirect fashion.
The idea is to work with another model for the $K$-theory of the simplicial ring $R_{\bullet}=\Z[G(X)]$, equip it with an involution $\tau_V$ and then show on one hand that it is compatible with $\tau_{\varepsilon}$ under the linearization map, and on the other hand that it agrees with $\tau_{BF}$. We will recall the involutions $\tau_{BF}$ and $\tau_V$ on $K(\mathbb{Z}[G(X)])$ defined by Burghelea-Fiedorowicz and Vogell, respectively, in Sections \ref{Section:BFInv} and \ref{Section:VInv}. In Section \ref{Section:BFInvVInv}, we prove that the two involutions coincide and  deduce the following theorem.
\begin{theorem}\label{Theorem:AinvKinv}
Let $X$ be a connected simplicial set. For each $i\geq 0$, there is an isomorphism $\pi
_{i}(A(\left\vert X \right\vert),A(\ast))\otimes\mathbb{Q}\rightarrow\widetilde{K}_{i}(\mathbb{Z}[G(X)])\otimes\mathbb{Q}$ such that the following diagram commutes:
$$\xymatrix{\pi _{i}(A(\left\vert X \right\vert),A(\ast ))\otimes\mathbb{Q}\ar[r]\ar[d]_{\tau_{\varepsilon\ast}}&\widetilde{K}_{i}(\mathbb{Z}[G(X)])\otimes\mathbb{Q}\ar[d]_{\tau_{BF\ast}}\\
\pi _{i}(A(\left\vert X \right\vert),A(\ast ))\otimes\mathbb{Q}\ar[r]&\widetilde{K}_{i}(\mathbb{Z}[G(X)])\otimes\mathbb{Q}}$$
\end{theorem}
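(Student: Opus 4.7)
The plan is to promote the rational isomorphism (\ref{isomorphism:AK}) from a formal consequence of Waldhausen's linearization theorem to an equivariant statement, by verifying that Vogell's geometric involution on $A(|X|)$ and the Burghelea--Fiedorowicz algebraic involution on $K(\mathbb{Z}[G(X)])$ correspond under the linearization map. The theorem then follows by passing to $\pi_{i}\otimes\mathbb{Q}$ and quotienting out the summands contributed by the basepoint.

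First I would recall the Waldhausen linearization map of spectra $\ell : A(|X|) \to K(\mathbb{Z}[G(X)])$, obtained by sending a finite retractive CW-space over $|X|$ to the cellular chain complex of its universal cover, viewed as a perfect complex of $\mathbb{Z}[G(X)]$-modules through the identification of $G(X)$ with the Kan loop group of $X$. Waldhausen showed that $\ell$ is a rational equivalence, and naturality in $X$ applied to the map $|X|\to *$ produces the splitting that yields the isomorphism $\pi_{i}(A(|X|), A(*))\otimes\mathbb{Q}\cong \widetilde{K}_{i}(\mathbb{Z}[G(X)])\otimes\mathbb{Q}$.

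The heart of the argument is Lemma \ref{Invcoincide}, which asserts that $\ell$ intertwines $\tau_{V}$ with $\tau_{BF}$ up to natural homotopy. To check this I would test both involutions on a generating retractive space $Y = |X|\cup_{f} e^{n}$ with attaching map $f$ representing an element of $G(X)$. Vogell's involution is constructed by a geometric duality on retractive spaces that reverses orientations of attached cells and twists attaching maps by loop inversion in $G(X)$. After applying $\ell$, this matches the Burghelea--Fiedorowicz recipe of dualizing a perfect complex of $\mathbb{Z}[G(X)]$-modules with respect to the anti-involution $g \mapsto g^{-1}$, because cellular-chain duality combined with inversion of structure maps is precisely $\operatorname{Hom}_{\mathbb{Z}[G(X)]}(-, \mathbb{Z}[G(X)])$ twisted by inversion. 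The main obstacle is keeping track of sign conventions, orientation twists, and the identification of the dualizing module on both sides; once the comparison is established on a single cell, additivity of both involutions over cofiber sequences propagates the agreement to all of $A(|X|)$.

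With Lemma \ref{Invcoincide} in hand the theorem is immediate. Applying $\pi_{i}(-)\otimes\mathbb{Q}$ to the homotopy-commutative square produced by the lemma gives a commutative diagram on unreduced homotopy groups; both $\tau_{V}$ and $\tau_{BF}$ restrict to the standard involution on $A(*)$ respectively $K(\mathbb{Z})$, so the square descends to the reduced theories and yields the equivariant isomorphism
\[
\pi_{i}(A(|X|), A(*))\otimes\mathbb{Q} \longrightarrow \widetilde{K}_{i}(\mathbb{Z}[G(X)])\otimes\mathbb{Q}
\]
as claimed.
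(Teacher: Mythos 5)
Your outline correctly identifies the target (showing that Vogell's involution on $A(|X|)$ and Burghelea--Fiedorowicz's involution on $K(\mathbb{Z}[G(X)])$ correspond), but the argument you propose for the key comparison would not go through, and it differs significantly from what the paper actually does.

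The first problem is that you try to compare $\tau_{V}$ and $\tau_{BF}$ directly under Waldhausen's linearization map $\ell : A(|X|)\to K(\mathbb{Z}[G(X)])$. But $\tau_{V}$ is not defined by a pointwise formula on retractive spaces: Vogell's construction passes through an auxiliary space $\mathcal{D}K(|X|)$ built from a category of retractive $G(X)$-sets \emph{with duality data}, and $\tau_{V}$ on $A(|X|)$ is only induced through a chain of homotopy equivalences. The paper therefore does not compare the two involutions along $\ell$ at all; instead it introduces the intermediary $\mathcal{D}K(\mathbb{Z}[G(X)])$ with its anti-involution $\bar\tau_{V}$ and proves two separate equivariance statements: Lemma \ref{Lemma:VInv} (relating $\tau_{V}$ to $\bar\tau_{V}$ via the top row of diagram (\ref{diagram:linearization})), and Theorem \ref{Invcoincide} (constructing a new homotopy equivalence $\eta_{X}:K(\mathbb{Z}[G(X)])\to\mathcal{D}K(\mathbb{Z}[G(X)])$ intertwining $\tau_{BF}$ and $\bar\tau_{V}$). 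Your restatement of Theorem \ref{Invcoincide} as ``$\ell$ intertwines $\tau_{V}$ with $\tau_{BF}$'' is not what it says, and composing $\ell$ with a known equivalence does not immediately yield $\eta_{X}$.

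The second problem is the step ``test both involutions on a generating retractive space $Y=|X|\cup_{f}e^{n}$ \dots additivity of both involutions over cofiber sequences propagates the agreement.'' This is not a valid argument for homotopy-commutativity at the space level. The involutions are maps of plus-constructed classifying spaces of (simplicial) categories; agreement on the isomorphism classes of a generating set of objects does not produce a homotopy between two self-maps, nor does additivity over cofiber sequences give such a homotopy. The paper's proof of Theorem \ref{Invcoincide} does the actual work: it constructs the intertwining map $\eta_{X}$ explicitly through the diagram (\ref{diagram:KDK}) by composing $\mathcal{D}\Sigma\circ\Phi$ with a homotopy inverse of $\mathcal{D}\Upsilon$, all of which strictly commute with the anti-involutions, and then establishes that $\eta_{X}$ is a homotopy equivalence. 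That last step is nontrivial and rests on proving that the suspension map $\mathcal{C}_{\cdot A_{k}^{0}}\to\mathcal{C}_{\cdot A_{k}^{n}}$ is an equivalence, which in turn reduces to the Claim that $A\to \mathrm{Hom}_{\mathbb{Z}}(\widetilde{\mathbb{Z}}[S^{1}],\widetilde{\mathbb{Z}}[S^{1}]\otimes A)$ is a weak equivalence for a simplicial abelian group $A$. None of this technical content is addressed or replaced by your sketch, so there is a genuine gap.
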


\subsubsection{Geometric realization of simplicial functors}

In order to introduce the involutions defined by Vogell and Burghelea-Fiedorowicz, we
recall from \cite[Section 5.2]{HKVWW} a general method to induce a map between topological spaces out of
a (contravariant) functor of simplicial categories.
Given a contravariant functor $F:C\rightarrow D$ between small categories, it induces a map $N_{\bullet}F$
from the nerve $N_{\bullet}C$ to $N_{\bullet}D$ where $N_{n}F:N_{n}C%
\rightarrow N_{n}D$ is given by $\left( g_{1},\cdots ,g_{n}\right) \mapsto
\left( F(g_{n}),\cdots ,F(g_{1})\right) $ for each $n\geq 1$ and composable
morphisms $g_{i}$ in the category $C$. Let $|C|$ denote the classifying
space of the category $C$, i.e., the geometric realization of the nerve $%
N_{\bullet}C$. Then the map $N_{\bullet}F$ is anti-simplicial (i.e. $s_{j}\circ N_{n}F=N_{n+1}F\circ s_{n-j}$ and $d_{j}\circ N_{n}F=N_{n-1}F\circ
d_{n-j}$ for $n\geq 1$ and $j\leq n$) and hence induces a map $|C|\rightarrow |D|$ via
\begin{equation*}
\begin{array}{ccc}
N_{n}C\times |\Delta ^{n}| & \longrightarrow & N_{n}D\times |\Delta ^{n}| \\
\left( x,s\right) & \mapsto & \left( N_{n}F(x),\Phi _{n}(s)\right)%
\end{array}%
\end{equation*}%
where $\Phi _{n}:|\Delta ^{n}|\longrightarrow |\Delta ^{n}|$ is the
simplicial homeomorphism of the geometric realization $|\Delta ^{n}|$
of the standard simplex $\Delta ^{n}$ which reverses the order of the
vertices. More generally, let $F_{\bullet}:C_{\bullet}\rightarrow D_{\bullet}$
be a simplicial contravariant functor of simplicial categories. As $%
F_{n}:C_{n}\rightarrow D_{n}$ is a contravariant functor of categories for
each dimension $n$, repeating the previous construction in each dimension $n$ gives rise to a map $N_{\bullet
}F_{\bullet}$ from the bisimplicial set $N_{\bullet}C_{\bullet}$ to $N_{\bullet
}D_{\bullet}$, which is antisimplicial in the first index and simplicial in
the second index. Let $|C_{\bullet}|$ denote the classifying space of the
simplicial category $C_{\bullet}$, namely, the double geometric realization
of the nerve $N_{\bullet}C_{\bullet}$ (The double geometric realization is
homeomorphic to the geometric realization of the diagonal of the
bisimplicial set, see \cite[p.94, Lemma]{Quillen}). Then $N_{\bullet}F_{\bullet}$
induces a map $|F_{\bullet}|:|C_{\bullet}|\rightarrow |D_{\bullet}|$ via the map%
\begin{equation*}
\begin{array}{ccc}
N_{n}C_{k}\times |\Delta ^{n}|\times |\Delta ^{k}| & \longrightarrow &
N_{n}D_{k}\times |\Delta ^{n}|\times |\Delta ^{k}| \\
\left( x,s,t\right) & \mapsto & \left( N_{n}F_{k}(x),\Phi _{n}(s),t\right)%
\end{array}%
\end{equation*}
In summary, we have the following lemma.

\begin{lemma}
\label{simplicialfunctor}Every simplicial contravariant functor $F_{\bullet}:C_{\bullet}\rightarrow D_{\bullet}$ of simplicial small categories induces a map $|F_{\bullet}|:|C_{\bullet}|\rightarrow |D_{\bullet}|$ between
the classifying spaces of the simplicial categories in a natural way. In particular, every anti-involution  $C_{\bullet}\rightarrow C_{\bullet}$ (i.e. a simplicial contravariant functor whose square is the identity functor) induces an involution $\vert C_{\bullet}\vert\rightarrow \vert C_{\bullet}\vert$.
\end{lemma}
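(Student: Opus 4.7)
The plan is to verify that the prescribed formula
\[
(x, s, t) \;\longmapsto\; (N_n F_k(x), \Phi_n(s), t)
\]
on $\bigsqcup_{n,k} N_n C_k \times |\Delta^n| \times |\Delta^k|$ descends to a well-defined continuous map on the double geometric realization $|C_\cdot| \to |D_\cdot|$, that this assignment is natural in $F_\cdot$, and finally that the square of an anti-involution is the identity. First I would split the well-definedness check into a horizontal (nerve-direction) compatibility and a vertical (simplicial-category-direction) compatibility with the face and degeneracy identifications that cut out the double realization from the disjoint union.

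The vertical compatibility is automatic because $F_\cdot$ is a genuine simplicial transformation in the second index and the prescription is the identity on $|\Delta^k|$. For the horizontal direction, the main task (really just bookkeeping) is to combine the anti-simplicial relations $d_j \circ N_n F_k = N_{n-1} F_k \circ d_{n-j}$ and $s_j \circ N_n F_k = N_{n+1} F_k \circ s_{n-j}$ recorded in the excerpt with the standard reversal identities
\[
\Phi_n \circ \delta_j = \delta_{n-j} \circ \Phi_{n-1}, \qquad \Phi_n \circ \sigma_j = \sigma_{n-j} \circ \Phi_{n+1}
\]
for the canonical face and degeneracy inclusions of the standard simplices. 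A short unwinding then shows that the images of $(x, \delta_j s', t)$ and $(d_{n-j} x, s', t)$ agree in $|D_\cdot|$, and likewise for the degeneracy relations, so the formula factors through $|C_\cdot|$. Continuity and functoriality in $F_\cdot$ are inherited from the definition of the double realization.

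For the anti-involution statement I would apply the construction to the composite $F_\cdot \circ F_\cdot$. On the nerve this composite is the identity, since
\[
N_n F_k\bigl(N_n F_k(g_1, \ldots, g_n)\bigr) = N_n F_k(F_k(g_n), \ldots, F_k(g_1)) = (g_1, \ldots, g_n),
\]
and because $\Phi_n \circ \Phi_n = \mathrm{id}_{|\Delta^n|}$, the composition $|F_\cdot| \circ |F_\cdot|$ equals the identity on $|C_\cdot|$. The main obstacle is thus not conceptual but the careful horizontal face/degeneracy check; once the two reversal identities for $\Phi_n$ are in hand the rest is formal.
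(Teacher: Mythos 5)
Your proposal is correct and takes essentially the same approach as the paper: the construction of $|F_\cdot|$ via $(x,s,t)\mapsto(N_nF_k(x),\Phi_n(s),t)$ together with the anti-simplicial compatibility in the nerve direction is exactly what the paper spells out in Section 2.1.1, of which the lemma is a summary. Your explicit reversal identities $\Phi_n\circ\delta_j=\delta_{n-j}\circ\Phi_{n-1}$ and $\Phi_n\circ\sigma_j=\sigma_{n-j}\circ\Phi_{n+1}$, and the direct check that $|F_\cdot|\circ|F_\cdot|=\mathrm{id}$ via $N_nF_k\circ N_nF_k=\mathrm{id}$ and $\Phi_n^2=\mathrm{id}$, simply make explicit the bookkeeping the paper leaves implicit.
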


\subsubsection{Burghelea-Fiedorowicz's involution}\label{Section:BFInv}
Let $R_{\bullet}$ denote the simplicial group ring $\Z[G(X)]$  generated by the Kan loop group $G(X)$ of $X$. Waldhausen \cite{WaldhausenI1978} defines the K-theory $K(R_{\bullet})$ of $R_{\bullet}$ and Burghelea and Fiedorowicz  \cite{BF2} define an involution $\tau_{BF}$ on $K(R_{\bullet})$ as follows.  Consider $\pi
_{0}:R_{\bullet}\rightarrow \pi _{0}(R_{\bullet})$ as a map of simplicial rings and define the
simplicial monoid $\widehat{GL}_{n}(R_{\bullet})$ by the pull back diagram
$$\xymatrix{\widehat{GL}_{n}(R_{\bullet})\ar[r]\ar[d] & M_{n}(R_{\bullet})\ar[d]\\
GL_{n}(\pi _{0}R_{\bullet})\ar[r] & M_{n}(\pi _{0}R_{\bullet})}$$
where $M_{n}(R_{\bullet})$ is the simplicial ring of the $n\times n$ matrices in $R_{\bullet}$
and the bottom horizontal map is the inclusion of the invertible matrices.
Let $B\widehat{GL}(R_{\bullet})$ be the classifying space of the simplicial monoid $%
\widehat{GL}(R_{\bullet})=\underset{n}{\underrightarrow{\lim}} \widehat{GL}_{n}(R_{\bullet})$ and then apply Quillen's plus
construction to define
\begin{equation*}
K(R_{\bullet}):=\Z\times B\widehat{GL}(R_{\bullet})^{+}
\end{equation*}
Regard the simplicial monoid $\widehat{GL}(R_{\bullet})$ as a
simplicial category with one object in every simplicial degree and consider the simplicial contravariant
functor $\widehat{GL}(R_{\bullet})\rightarrow \widehat{GL}(R_{\bullet})$  given by $$\left(
\widehat{GL}_{n}(R_{\bullet})\right) _{p}\rightarrow \left( \widehat{GL}_{n}(R_{\bullet})\right)
_{p}\subset M_{n}(R_{p}):(a_{ij})\mapsto (\overline{a_{ji}})$$ where the
conjugation $\overline{a}$ of $a\in R_{p}$ is induced by linearly extending
the inverse map of the group $(G(X))_{p}$. Then it follows from Lemma \ref%
{simplicialfunctor} that this induces an involution on the classifying space
$B\widehat{GL}(R_{\bullet})$. Applying the plus construction, this gives the Burghelea-Fiedorowicz's involution
\beq
\tau_{BF}:K(R_{\bullet})\to K(R_{\bullet})
\eeq
and hence induces the involution $\tau_{BF\ast}$ on $K_{\ast}(R_{\bullet})=\pi_{\ast}K(R_{\bullet})$.

Since the constant map $X\rightarrow\ast$ induces a retraction $K(\mathbb{Z}[G(X)])\rightarrow K(\mathbb{Z})$, then the inclusion $\ast\rightarrow X$ induces a monomorphism $K_{\ast}(\mathbb{Z}[\ast])\rightarrow K_{\ast}(\mathbb{Z}[G(X)])$ which commutes with the involution $\tau_{BF\ast}$. This induces an involution on $\widetilde{K}_{\ast}(\mathbb{Z}[G(X)])=K_{\ast}(\mathbb{Z}[G(X)])/K_{\ast}(\mathbb{Z}[\ast])$ which is also denoted by $\tau_{BF\ast}$.

\subsubsection{Vogell's involution}\label{Section:VInv}
In order to define the Vogell's involution $\tau_V$ on $K(R_{\bullet})$, we recall the other model for the $K$-theory of a simplicial ring which appears already in \cite[p.393]{Waldhausen100} as follows.
Consider the category $\mathcal{M}(R_{\bullet})$ of simplicial (right) modules over $R_{\bullet}$
and their $R_{\bullet}$-linear maps which are weak homotopy equivalences. Given two simplicial modules $A$ and $B$ in $\mathcal{M}(R_{\bullet})$%
, we say $B$\textit{\ is obtained from }$A$\textit{\ by attaching of an }$n$%
\textit{-cell} if there is a pushout diagram
$$\xymatrix{R_{\bullet}[\partial \Delta ^{n}]\ar[r]\ar[d]& R_{\bullet}[\Delta ^{n}]\ar[d]\\
A\ar[r]& B}$$
where $R_{\bullet}[Y]$ denotes the simplicial $R_{\bullet}-$module generated by the simplicial set $Y$, namely, for
each $n$, $\left( R_{\bullet}[Y]\right) _{n}$ is the free $R_{n}-$module generated by $%
Y_{n}$. Define $\mathcal{C}$ to be the full subcategory of the modules which are
obtainable from the zero module by attaching of finitely many cells. Define
$$A^n_k:= R_{\bullet}[\vee _{k}S^{n}]/R_{\bullet}[\ast ]$$  where $S^{n}=\Delta ^{n}/\partial
\Delta ^{n}$ and $\vee _{k}S^{n}$ is the one point union of $k$-copies of $%
S^{n}$ at the point $\ast $. Let $\mathcal{C}_{A_{k}^{n}}$ denote the connected
component of $\mathcal{C}$ containing $A_{k}^{n}$. Taking direct limits with respect to
the functors $\mathcal{C}_{A_{k}^{n}}\rightarrow \mathcal{C}_{A_{k}^{n+1}}$ and $%
\mathcal{C}_{A_{k}^{n}}\rightarrow \mathcal{C}_{A_{k+1}^{n}}$ induced by the tensor product $%
\otimes_{\mathbb{Z}}\mathbb{Z}\lbrack S^{1}]/\mathbb{Z}\lbrack \ast ]$ and the natural inclusion $\vee _{k}S^{n}\rightarrow \vee _{k+1}S^{n}$,
one can define
\begin{equation*}
K^{\prime }(R_{\bullet}):=\mathbb{Z}\times\left\vert \underset{n,k}{\underrightarrow{\lim}}\,\mathcal{C}_{A_{k}^{n}}\right\vert
^{+}
\end{equation*}%
which is homotopy equivalent to $K(R_{\bullet})=\mathbb{Z}\times B\widehat{GL}(R_{\bullet})^{+}$ \cite[p.396]{Waldhausen100}.

In order to define the desired involution on $K'(R_{\bullet})$, we enlarge the category
$\mathcal{C}$ to one that includes duality data (as pointed out in \cite[Remark p.306]{Vogelllong}). First note that $A_1^n=R_{\bullet}[S^{n}]/R_{\bullet}[\ast ]$ can be regarded as a right $R_{\bullet}\otimes_{\mathbb{Z}} R_{\bullet}$--module by $$r\cdot
(s\otimes t)=\overline{t}rs$$ for $r\in R_{k}$ and $s\otimes t\in
R_{k}\otimes R_{k}$, where $\overline{t}\in R_k$ is obtained from $t$ by taking inverses in the group $G(X)_k$.
Let $A_{R_{\bullet}}$ denote
$A\otimes_{R_{\bullet}}\pi_0 R_{\bullet}$.
Hence if $A$ and $A^{\prime }$ are simplicial modules in the category $\mathcal{C}$ then any $R_{\bullet}\otimes_{\mathbb{Z}}
R_{\bullet}$--map $\omega :A\otimes_{\mathbb{Z}} A^{\prime }\longrightarrow A_1^n$ induces naturally a bilinear map
\begin{equation}\label{eq:bilinear map}
\pi_q(A_{R_{\bullet}})\times\pi_{n-q}(A'_{R_{\bullet}})\to\pi_n(A_{R_{\bullet}}\otimes_{\mathbb{Z}} A'_{R_{\bullet}})\to\pi_n(A^n_{1R_{\bullet}})\cong\pi_0R_{\bullet}
\end{equation}
for all $0\leq q \leq n$. Note that $\pi_{\ast}(A_{k}^{n}\otimes_{R_{\bullet}}\pi_{0}R_{\bullet})=H_{\ast}((\vee _{k}S^{n})\wedge G_{+})\times^{G}E, \ast\times^{G}E;\pi_{0}R_{\bullet})$ where $G=G(X)$ and $E$ is the universal $G$--bundle, c.f. \cite[p.285]{Vogelllong} and \cite[p.171]{Vogellshort}.

\begin{definition}\
\begin{enumerate}
\item An $R_{\bullet}\otimes_{\mathbb{Z}}
R_{\bullet}$-map $\omega :A\otimes_{\mathbb{Z}} A^{\prime }\longrightarrow A_1^n$ is called a linear $n$--duality map if the bilinear map (\ref{eq:bilinear map}) induces an isomorphism
$$\pi_q(A_{R_{\bullet}})\cong \Hom_{\pi_{0}R_{\bullet}}(\pi _{n-q}(A_{R_{\bullet}}^{\prime }),\pi_{0}R_{\bullet})$$
for all $0\leq q\leq n$.
\item Let $G=G(X)$. The map $$\mu _{n,m}:A_{k}^{n}\otimes_{\mathbb{Z}} A_{k}^{m}\rightarrow A_1^{n+m}$$ is defined as the $\Z$-linearization of the map
 $$(\vee _{k}S^{n})\wedge G_{+}\wedge
(\vee _{k}S^{m})\wedge G_{+}\longrightarrow S^{n+m}\wedge G_{+}$$
induced by the smash product $S^{n}\wedge S^{m}\longrightarrow
S^{n+m}$ and the map $G\times G\rightarrow G$, $(g,h)\mapsto h^{-1}g$.
\item Given a linear $n$-duality map $\omega :A\otimes_{\mathbb{Z}} A^{\prime }\to A_1^n$, let
$\overline{\omega }$ be the
composition $$A^{\prime }\otimes_{\mathbb{Z}} A\cong A\otimes_{\mathbb{Z}} A^{\prime }\overset{\omega }{%
\rightarrow }A_1^n\rightarrow A_1^n$$
where the last map is given by linearly extending the map
\begin{equation*}
\begin{array}{ccccccc}
S^{n}\wedge G_{+} & \approx & \wedge ^{n}S^{1}\wedge G_{+} & \longrightarrow &
\wedge ^{n}S^{1}\wedge G_{+} & \approx & S^{n}\wedge G_{+} \\
&  & \left[ x_{1},x_{2},\cdots ,x_{n},g\right] & \mapsto & \left[
x_{n},x_{n-1},\cdots ,x_{1},g^{-1}\right] &  &
\end{array}%
\end{equation*}
to a self-map of $A_1^n$.

\end{enumerate}
\end{definition}
Note that the maps $\mu_{n,m}$ and $\overline{\omega }$ defined above are linear duality maps by \cite[p.285, Example 1.7]{Vogelllong} and \cite[p.301]{Vogelllong}.

Let $\mathcal{D}\mathcal{C}^{n}$ be the category in which an object is a
triple $\left( A,A^{\prime },\omega \right) $ where $A$ and $A^{\prime }$
are simplicial modules in the category $\mathcal{C}$ and $\omega $ is a linear $n$-duality map. A morphism in $\mathcal{D}\mathcal{C}^{n}$ is a pair $$\left( f,f^{\prime
}\right) :\left( A,A^{\prime },\omega \right) \rightarrow \left( B,B^{\prime
},\eta \right) $$ where $f:A\rightarrow B$ and $f^{\prime }:B^{\prime
}\rightarrow A^{\prime }$ are morphisms in $\mathcal{C}$ such that the following
diagram commutes:
$$\xymatrix{A\otimes_{\mathbb{Z}} B^{\prime }\ar[r]^{f\otimes_{\mathbb{Z}} id}\ar[d]_{id\otimes_{\mathbb{Z}} f^{\prime }}& B\otimes_{\mathbb{Z}} B^{\prime }\ar[d]_{\eta}\\
A\otimes_{\mathbb{Z}} A^{\prime }\ar[r]^-\omega& A_1^n}$$
Define%
\begin{equation*}
\mathcal{D}K(R_{\bullet}):=\mathbb{Z}\times\left\vert \underset{n,m,k}{\underrightarrow{\lim} }\mathcal{D}%
\mathcal{C}_{(A_{k}^{n},A_{k}^{m},\mu_{n,m} )}^{n+m}\right\vert ^{+}
\end{equation*}%
where $\mathcal{D}\mathcal{C}_{(A_{k}^{n},A_{k}^{m},\mu_{n,m} )}^{n+m}$ is the connected
component of $\mathcal{D}\mathcal{C}^{n+m}$ containing the triple $%
(A_{k}^{n},A_{k}^{m},\mu_{n,m} )$ and the limit here is taken with respect to the
functors induced by the tensor product $\otimes_{\mathbb{Z}}\mathbb{Z}\lbrack S^{1}]/\mathbb{Z}\lbrack \ast ]$ and the natural inclusion $\vee _{k}S^{n}\rightarrow \vee _{k+1}S^{n}$.
By Lemma \ref{simplicialfunctor}, an involution $$\tau_{V}:\mathcal{D}K(R_{\bullet})\to\mathcal{D}K(R_{\bullet})$$ can be induced by the anti-involution (contravariant functor) $%
\mathcal{D}\mathcal{C}^{n+m}\rightarrow \mathcal{D}\mathcal{C}^{n+m}$ which sends an object $%
\left( A,A^{\prime },\omega \right) $ to $\left( A^{\prime },A,\overline{%
\omega }\right) $ and sends a morphism $\left( f,f^{\prime }\right) $ to $%
\left( f^{\prime },f\right) $. Analogously to the proof of \cite[Corollary 1.16]{Vogelllong}, one can show that there is a homotopy equivalence
\begin{equation}\label{eq:dual-nondual}
\mathcal{D}K(R_{\bullet})\rightarrow K^{\prime }(R_{\bullet})
\end{equation}
 induced by $\mathcal{%
D}\mathcal{C}_{(A_{k}^{n},A_{k}^{m},\mu_{n,m})}^{n+m}\longrightarrow \mathcal{C}_{A_{k}^{n}}$ which
maps an object $\left( A,A^{\prime },\omega \right) $ to $A$ and maps a
morphism $\left( f,f^{\prime }\right) $ to $f$. This yields an involution on $K'(R_{\bullet})$ and hence on $K(R_{\bullet})$, as $K(R_{\bullet})$ is homotopy equivalent to $K'(R_{\bullet})$ \cite[Corollary p.396]{Waldhausen100}.

Finally note that the constant map $X\rightarrow \ast$ induces a retraction $\mathcal{D}K(\mathbb{Z}[G(X)])\rightarrow \mathcal{D}K(\mathbb{Z}[\ast])$ and the inclusion $\mathcal{D}K(\mathbb{Z}[\ast])\rightarrow\mathcal{D}K(\mathbb{Z}[G(X)])$ commutes with the involution $\tau_{V}$. This induces the involution $$\tau_{V\ast}:\pi_{\ast}(\mathcal{D}K(\mathbb{Z}[G(X)]),\mathcal{D}K(\mathbb{Z}[\ast]))\to\pi_{\ast}(\mathcal{D}K(\mathbb{Z}[G(X)]),\mathcal{D}K(\mathbb{Z}[\ast]))$$.
\begin{lemma}\label{Lemma:VInv}
Let $A(\left\vert X \right\vert)$ denote the Waldhausen's $K$-theory of the geometric realization of $X$. Then there is a map $A(\left\vert X \right\vert)\rightarrow\mathcal{D}K(\mathbb{Z}[G(X)])$ inducing an isomorphism $$\pi_{i}(A(\left\vert X \right\vert),A(\ast))\otimes \mathbb{Q}\rightarrow \pi_{i}(\mathcal{D}K(\mathbb{Z}[G(X)]),\mathcal{D}K(\mathbb{Z}(\ast)))\otimes \mathbb{Q}$$for all $i$  such that the following diagram commutes:
$$\xymatrix{\pi_{i}(A(\left\vert X \right\vert),A(\ast))\otimes \mathbb{Q}\ar[r]\ar[d]_{\tau_{\varepsilon\ast}} & \pi_{i}(\mathcal{D}K(\mathbb{Z}[G(X)]),\mathcal{D}K(\mathbb{Z}(\ast)))\otimes \mathbb{Q}\ar[d]^{\tau_{V\ast}}\\
\pi_{i}(A(\left\vert X \right\vert),A(\ast))\otimes \mathbb{Q}\ar[r] & \pi_{i}(\mathcal{D}K(\mathbb{Z}[G(X)]),\mathcal{D}K(\mathbb{Z}(\ast)))\otimes \mathbb{Q}}$$
\end{lemma}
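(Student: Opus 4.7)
The plan is to factor Waldhausen's linearization through a dual model of $A(|X|)$ so that it becomes strictly equivariant for the natural anti-involutions, and then invert the dual equivalences rationally.

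First, I recall from \cite{Vogelllong} that, in parallel with the construction of $\mathcal{D}K(R)$ in Section \ref{Section:VInv}, there is a dual model $\mathcal{D}A(|X|)$ of $A(|X|)$: objects are triples $(Y, Y', \omega)$ with $Y, Y'$ finite retractive spaces over $|X|$ (equivalently, pointed free $G(X)$--complexes obtained by attaching finitely many cells) and $\omega : Y \wedge_G Y' \to S^n \wedge G_+$ a geometric $n$--duality map; morphisms and the anti-involution $(Y, Y', \omega) \mapsto (Y', Y, \bar\omega)$ are defined as geometric analogues of those in $\mathcal{D}\mathcal{C}^{n+m}$. The geometric analogue of the equivalence (\ref{eq:dual-nondual}) produces a weak equivalence $\mathcal{D}A(|X|) \xrightarrow{\simeq} A(|X|)$ by forgetting $Y'$ and $\omega$, and by definition the involution $\tau_V$ on $A(|X|)$ is transferred from the natural anti-involution on $\mathcal{D}A(|X|)$ via Lemma \ref{simplicialfunctor}.

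Second, I extend Waldhausen's linearization $Y \mapsto R[Y]$ (with $R = \mathbb{Z}[G(X)]$) to a simplicial functor
$$L : \mathcal{D}A(|X|) \longrightarrow \mathcal{D}K(R), \qquad (Y, Y', \omega) \mapsto (R[Y], R[Y'], R[\omega]).$$
The functor $L$ commutes with the two anti-involutions on the nose: linearization is symmetric monoidal for $\wedge$ versus $\otimes_{\mathbb{Z}}$, and the geometric involution $[x_1, \ldots, x_n, g] \mapsto [x_n, \ldots, x_1, g^{-1}]$ used to define $\bar\omega$ linearizes tautologically to the algebraic involution used to define $\overline{L(\omega)}$, so $L(\bar\omega) = \overline{L(\omega)}$. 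Hence by Lemma \ref{simplicialfunctor} the realization $|L|$ induces a map of classifying spaces strictly intertwining the two involutions, and the plus construction carries this through to the $K$--theory level.

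Third, one obtains a zig-zag
$$A(|X|) \xleftarrow{\simeq} \mathcal{D}A(|X|) \xrightarrow{|L|^+} \mathcal{D}K(R)$$
in which both arrows are equivariant by construction; composing with the equivalence $\mathcal{D}K(R) \to K(R)$ from (\ref{eq:dual-nondual}) recovers Waldhausen's linearization map, which is a rational equivalence by \cite[Proposition 2.2]{WaldhausenI1978}. The same holds at the basepoint (linearization $A(\ast) \to \mathcal{D}K(\mathbb{Z})$ is a rational equivalence), so the five lemma applied to the long exact sequences of the pairs $(A(|X|), A(\ast))$ and $(\mathcal{D}K(\mathbb{Z}[G(X)]), \mathcal{D}K(\mathbb{Z}[\ast]))$ yields the required rational isomorphism of relative homotopy groups, fitting in the commutative square with $\tau_{V\ast}$ and $\bar{\tau}_{V\ast}$. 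The main obstacle is the strict equivariance identity $L(\bar\omega) = \overline{L(\omega)}$: one must verify carefully that the geometric conjugation $g \mapsto g^{-1}$ on $G(X)$ matches the conjugation on $R$ used in defining the anti-involution on $\widehat{GL}(R)$-matrices, and that the coordinate reversal on $\wedge^n S^1$ agrees between the pointed-space and simplicial-module models. Once this bookkeeping is done, the remainder of the argument is formal, combining Vogell's dual construction, the equivalence (\ref{eq:dual-nondual}), Waldhausen's linearization theorem, and a routine five-lemma argument.
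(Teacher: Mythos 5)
Your proposal is correct and follows essentially the same route as the paper: your $\mathcal{D}A(|X|)$ is precisely the dual model $\mathcal{D}K(|X|)$ (Vogell's $\mathbb{Z}\times |\underset{k,l,m}{\underrightarrow{\lim}} h\mathcal{DU}_{k}^{l,m}(G)|^{+}$) that the paper inserts into the commutative square with $\mathcal{D}K(\mathbb{Z}[G(X)])$, $A(|X|)$, and $K'(\mathbb{Z}[G(X)])$, and your zig-zag $A(|X|) \leftarrow \mathcal{D}A(|X|) \xrightarrow{|L|^+} \mathcal{D}K(R)$ is exactly the paper's "homotopy inverse of the left vertical map composed with the top horizontal map." The only differences are cosmetic: you make the five-lemma step for relative homotopy groups explicit where the paper leaves it implicit, and you flag the strict equivariance of $L$ as the key point to verify, which the paper delegates to a citation of Vogell.
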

Before we prove the lemma we need to recall yet another model for $A(\left\vert X \right\vert)$.  Let $G=G(X)$ be the Kan loop group of $X$. Consider the category
$h\mathcal{U}_{hf}(G)$ whose objects are pointed simplicial sets with a right free $G$-action (in the pointed sense) and which are finitely generated over $G$. Morphisms are $G$-maps which are weak homotopy equivalences. Restrict $h\mathcal{U}_{hf}(G)$ to the full subcategory $h\mathcal{U}_{k}^{n}(G)$ whose objects are homotopy equivalent to a wedge of $k$ spheres of dimension $n$. This category is an approximation to $A(\left\vert X \right\vert)$ \cite[Corollary and/or Definition, p.42]{WaldhausenI1978}, i.e. there is a homotopy equivalence
\begin{equation}\label{eqn: equiv-models}
A(\left\vert X \right\vert)\simeq\mathbb{Z}\times |\underset{k,n}{\underrightarrow{\lim}}\; h\mathcal{U}_{k}^{n}(G)|^{+}
\end{equation}
From this point of view, the linearization map $A(\left\vert X \right\vert)\rightarrow K'(\mathbb{Z}[G(X)])$ can be easily described: it sends a free pointed simplicial $G$-set to the free simplicial
$\mathbb{Z}[G]$-module generated by the nonbasepoint elements\cite{WaldhausenI1978}.

One can enlarge this category to a category
$h\mathcal{DU}_{k}^{l,m}(G)$ by adding duality data. We sketch the definition  as follows and refer the readers to \cite{Vogelllong} for details. An $(l+m)$-dual of an object $N$ in $h\mathcal{U}_{k}^{l}(G)$ is an object $N'$
in $h\mathcal{U}_{k}^{m}(G)$ together with a pointed right $(G\times G)-$map $u:N\wedge N'\to S^{l+m}\wedge G_+$ such that it induces an isomorphism of $\Z[\pi_0G]$-modules (via slant product)
\beq
H_q^G(N')\to H_G^{l+m-q}(N)
\eeq
for $0\leq q\leq n$. Here $u$ is called an $(l+m)-$duality map. An object in  $h\mathcal{DU}_{k}^{l,m}(G)$ should then be a triple $(N,N',u)$ where $N$ (resp. $N'$) is an object in $h\calU^l_k(G)$ (resp. $h\calU^m_k(G)$) and $u:N\wedge N'\to S^{l+m}\wedge G_+$ is an $(l+m)-$duality map. Morphisms are defined as we did above for the category $\mathcal{DC}^n$.

Now define $\mathcal{D}K(\left\vert X \right\vert):= \mathbb{Z}\times |\underset{k,l,m}{\underrightarrow{\lim}} h\mathcal{DU}_{k}^{l,m}(G)|^{+}$.  Similarly to (\ref{eq:dual-nondual}) there is a homotopy equivalence \cite[p.293, Remark]{Vogelllong}
\begin{equation}\label{eqn:forget-dual}
\mathcal{D}K(\left\vert X \right\vert)\xrightarrow{\simeq}\Z\times|\underset{k,n}{\underrightarrow{\lim}} h\mathcal{U}_{k}^{n}(G)|^{+}\simeq A(\left\vert X \right\vert)
\end{equation}
Furthermore, just as
$\mathcal{D}K(\Z[G(X)])$, the space $\mathcal{D}K(\left\vert X\right\vert)$ acquires a natural involution, and it follows from \cite[Proposition 1.14]{Vogelllong} that the equivalence (\ref{eqn:forget-dual}) is equivariant if the target is given the involution $\tau_{\varepsilon}$. We will refer to these two involutions on $A(\left\vert X \right\vert)$ with the same symbol $\tau_{\varepsilon}$.
\begin{proof}[Proof of Lemma \ref{Lemma:VInv}]
Note that the definitions have been made so that the linearization map $A(\left\vert X \right\vert)\to K'(\Z[G(X)])$ extends to a (linearization) map $\mathcal{D}K(\left\vert X \right\vert)\to\mathcal{D}K(\Z[G(X)])$ equivariant with respect to the involutions. In other words there is a commutative diagram
\begin{equation}\label{diagram:linearization}
\xymatrix{\mathcal{D}K(|X|)\ar[r]\ar[d]_{\simeq} & \mathcal{D}K(\mathbb{Z}[G(X)])\ar[d]^{\simeq}\\
A(|X|)\ar[r]& K'(\mathbb{Z}[G(X)])}
\end{equation}
where the right vertical map is the map (\ref{eq:dual-nondual}). The left vertical is the map (\ref{eqn:forget-dual}).
Since the bottom map is a rational homotopy equivalence \cite[Proposition 2.2]{WaldhausenI1978}, so is the top map. Moreover the diagram restricts to a commutative diagram when $X=\ast$. Then a homotopy inverse of the left vertical map in the diagram (\ref{diagram:linearization}) composed with the top horizontal map gives the desired map $A(|X|)\rightarrow \mathcal{D}K(\mathbb{Z}[G(X)])$ and this completes the proof.
\end{proof}

\subsubsection{The involutions $\tau_{BF} $ and $\tau_{V}$ coincide}\label{Section:BFInvVInv}

We now show that the two involutions $\tau_{BF} $ and $\tau_{V}$ coincide.

\begin{theorem}
\label{Invcoincide} Let $X$ be a simplicial set. There is a homotopy equivalence $$\eta_{X}: K(\mathbb{Z}[G(X)])\rightarrow \mathcal{D}K(\mathbb{Z}[G(X)])$$ such that the first diagram below commutes up to homotopy and the second diagram commutes
\begin{equation}\label{diagram:eta}
\xymatrix{K(\mathbb{Z}[G(X)])\ar[r]^{\eta_{X}}\ar[d]_{\tau_{BF}}&\mathcal{D}K(\mathbb{Z}[G(X)])\ar[d]_{\tau_{V}}\\
K(\mathbb{Z}[G(X)])\ar[r]^{\eta_{X}}&\mathcal{D}K(\mathbb{Z}[G(X)])}\ \ \ \
\xymatrix{K(\mathbb{Z}[\ast])\ar[r]^{\eta_{\ast}}\ar[d]&\mathcal{D}K(\mathbb{Z}[\ast])\ar[d]\\K(\mathbb{Z}[G(X)])\ar[r]^{\eta_{X}}&\mathcal{D}K(\mathbb{Z}[G(X)])}
\end{equation}

\end{theorem}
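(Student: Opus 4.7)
The plan is to define $\eta_{X}$ by lifting elements of $\widehat{GL}(R)$ (with $R = \mathbb{Z}[G(X)]$), which determine the homotopy type of $K(R) = \mathbb{Z} \times B\widehat{GL}(R)^{+}$, to duality pairs in the category defining $\mathcal{D}K(R)$, using the pairings $\mu_{n,m}$ already introduced. The guiding idea is that both $\tau_{BF}$ and $\bar{\tau}_{V}$ encode the same dualization operation in different categorical models; the work is to exhibit this agreement explicitly.

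For each triple $(n, m, k)$, I would define a simplicial functor
\begin{equation*}
\xi_{n,m,k}: \widehat{GL}_{k}(R) \longrightarrow \mathcal{D}\mathcal{C}^{n+m}_{(A_{k}^{n}, A_{k}^{m}, \mu_{n,m})}
\end{equation*}
(viewing $\widehat{GL}_{k}(R)$ as a one-object simplicial category) that sends the unique object to $(A_{k}^{n}, A_{k}^{m}, \mu_{n,m})$ and sends a morphism $g$ to the pair $(g, g^{\ast})$, where $g^{\ast}$ is the adjoint of $g$ with respect to $\mu_{n,m}$, i.e.\ the unique endomorphism of $A_{k}^{m}$ satisfying $\mu_{n,m} \circ (g \otimes \mathrm{id}) = \mu_{n,m} \circ (\mathrm{id} \otimes g^{\ast})$ (in matrix coordinates, the conjugate transpose $\overline{g^{T}}$). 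The functors $\xi_{n,m,k}$ are compatible with the stabilization maps that define $\mathcal{D}K(R)$, so after taking classifying spaces and applying the plus construction I obtain the desired $\eta_{X}: K(R) \to \mathcal{D}K(R)$. That $\eta_{X}$ is a homotopy equivalence is immediate: post-composing with the equivalence (\ref{eq:dual-nondual}) $\mathcal{D}K(R) \to K'(R)$, which projects $(A, A', \omega) \mapsto A$, recovers the standard equivalence $K(R) \simeq K'(R)$ of \cite[p.393]{Waldhausen100}, which forgets all duality data.

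Next I would verify the first square of (\ref{diagram:eta}) by tracing a morphism $g$ both ways: $\xi \circ \tau_{BF}$ sends $g$ to $\xi(g^{\ast}) = (g^{\ast}, (g^{\ast})^{\ast}) = (g^{\ast}, g)$, viewed as an endomorphism of $(A_{k}^{n}, A_{k}^{m}, \mu_{n,m})$; while $\bar{\tau}_{V} \circ \xi$ sends $g$ first to $(g, g^{\ast})$ and then, by the swap rule $(f, f') \mapsto (f', f)$ on morphisms of $\mathcal{D}\mathcal{C}$, to the same pair $(g^{\ast}, g)$ but viewed as an endomorphism of $(A_{k}^{m}, A_{k}^{n}, \bar{\mu}_{n,m})$. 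The two images thus agree as morphisms, up to a canonical identification between the triples $(A_{k}^{n}, A_{k}^{m}, \mu_{n,m})$ and $(A_{k}^{m}, A_{k}^{n}, \bar{\mu}_{n,m})$. The main technical obstacle is precisely this identification: unwinding the definition of $\bar{\mu}_{n,m}$ one must compare it with $\mu_{m,n}$, and the discrepancy is the coordinate-swap $S^{n} \wedge S^{m} \to S^{m} \wedge S^{n}$ together with inversion on $G(X)$. I would resolve this by showing that, within the symmetric colimit $\underset{n,m,k}{\underrightarrow{\lim}}$, the transposition $(n, m) \leftrightarrow (m, n)$ induces a natural transformation between the two functors into $|\mathcal{D}K(R)|$, hence a homotopy between the corresponding maps after the plus construction. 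The second square of (\ref{diagram:eta}) then commutes strictly by naturality, since the pairings $\mu_{n,m}$ and the adjoint construction are defined uniformly in the coefficient simplicial ring, so $\xi$ restricts correctly along $\mathbb{Z}[\ast] \hookrightarrow \mathbb{Z}[G(X)]$.
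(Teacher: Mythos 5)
Your overall strategy --- factor through $\widehat{GL}_k(R)$ and supply duality data via $g \mapsto (g, g^\ast)$ with $g^\ast = \overline{g^T}$ --- matches the spirit of the paper's construction, and your discussion of the first square is essentially the same idea implemented less cleanly (the paper maps into the $(0,0)$-component, where $(A_k^0, A_k^0, \mu_{0,0})$ is fixed by the anti-involution, and then suspends; this avoids the $(n,m)\leftrightarrow(m,n)$ transposition problem you flag and then leave as a sketch). However, there are two real gaps.

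First, a formal issue: an element $M$ of $(\widehat{GL}_k(R))_p$ does not directly give a morphism $A_k^n \to A_k^n$ in $\mathcal{C}$ --- it only gives a map $A_k^n \otimes_{\mathbb{Z}} \mathbb{Z}[\Delta^p] \to A_k^n$. This is precisely why the paper works with the ``blow-up'' simplicial categories $\mathcal{C}_{\cdot}$ and $\mathcal{D}\mathcal{C}_{\cdot}^{n+m}$, whose $p$-morphisms are pairs of maps $F : A\otimes_{\mathbb{Z}} \mathbb{Z}[\Delta^p] \to B$, $F' : B' \otimes_{\mathbb{Z}} \mathbb{Z}[\Delta^p] \to A'$, and then separately proves (via $\mathcal{D}\Upsilon$ and $\Upsilon$) that these are equivalent to the constant versions. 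Your functor $\xi_{n,m,k}$ as written does not land in a well-defined target.

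Second, and more substantively: you assert that $\eta_X$ being a homotopy equivalence is ``immediate'' because post-composition with $\mathcal{D}K(R) \to K'(R)$ recovers the standard equivalence $K(R) \simeq K'(R)$ from Waldhausen. This is not verified, and it is in fact where the real work lies. The composite $K(R) \to K'(R)$ you obtain factors (up to the blow-up equivalences) as $\widehat{GL}_k(R) \xrightarrow{\Psi} \mathcal{C}_{\cdot A_k^0} \xrightarrow{\Sigma} \mathcal{C}_{\cdot A_k^n}$; that $\Psi$ and the blow-up comparisons are equivalences is \cite[Prop.~2.3.5]{Waldhausen100}, but that the suspension map $\Sigma$ (tensoring with $\mathbb{Z}[S^n]/\mathbb{Z}[\ast]$) is a homotopy equivalence is a separate fact, and it is the bulk of the paper's proof. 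The paper reduces it to showing $Hom_R(A_k^0,A_k^0) \to Hom_R(A_k^n,A_k^n)$ is an equivalence, which in turn rests on a claim that $A \to Hom_{\mathbb{Z}}(\widetilde{\mathbb{Z}}[S^1],\widetilde{\mathbb{Z}}[S^1]\otimes A)$ is a weak equivalence for any simplicial abelian group $A$, proved by comparing with the path--loop fibration of $A[\Delta^1]/A[\partial\Delta^1]$ and applying the five lemma. Without some argument of this kind, your ``immediate'' is unjustified, and the proof of the theorem is incomplete.
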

\begin{proof}
Consider the following commutative diagram
\begin{equation}\label{diagram:KDK}
\xymatrix{&\mathcal{D}\mathcal{C}_{\bullet (A_{k}^{0},A_{k}^{0},\mu _{0,0})}^{0}\ar[r]^-{\mathcal{D}\Sigma}\ar[2,0]&\mathcal{D}\mathcal{C}_{\bullet (A_{k}^{n},A_{k}^{m},\mu _{n,m})}^{n+m}\ar[2,0]&\ar[l]_-{\mathcal{D}\Upsilon}\ar[2,0]\mathcal{D}\mathcal{C}_{(A_{k}^{n},A_{k}^{m},\mu _{n,m})}^{n+m}\\
\widehat{GL}_{k}(R_{\bullet})\ar[ur]^\Phi\ar[dr]_\Psi&&&\\
&\mathcal{C}_{\bullet A_{k}^{0}}\ar[r]^\Sigma&\mathcal{C}_{\bullet
A_{k}^{n}} &\ar[l]_{\Upsilon}\mathcal{C}_{A_{k}^{n}}}
\end{equation}

where the notations are defined below.
\begin{enumerate}
\item  $\mathcal{C}_{\bullet}$ is the simplicial category
$[m]\to \mathcal{C}_m$, where the objects of $\mathcal{C}_m$ are the same as those of $\mathcal{C}$ and the morphisms in $\mathcal{C}_m$ are $m$-parameter families of morphisms in $\mathcal{C}$ (see [Wal85, p.396]). Similarly
$\mathcal{D}\mathcal{C}_{\bullet}^{n}$ is a simplicial category defined as follows: for each $p$, the objects in the category $%
\mathcal{D}\mathcal{C}_{p}^{n}$ are the same with those in $\mathcal{D}\mathcal{C}^{n}$, and a
morphism $\left( A,A^{\prime },\omega \right) \rightarrow \left( B,B^{\prime
},\eta \right) $ in $\mathcal{D}\mathcal{C}_{p}^{n}$ consists of a pair of morphisms $%
F:A\otimes_{\mathbb{Z}}\mathbb{Z}\left[ \Delta ^{p}\right] \rightarrow B$ and $F^{\prime }:B^{\prime }\otimes_{\mathbb{Z}}\mathbb{Z}\left[ \Delta ^{p}\right] \rightarrow A^{\prime }$ in the category $\mathcal{C}$ such
that the following diagram commutes:
$$\xymatrix{A\otimes_{\mathbb{Z}}\mathbb{Z}\left[\Delta ^{p}\right] \otimes_{\mathbb{Z}} B^{\prime }\ar[r]^{\approx}\ar[d]_{F\otimes_{\mathbb{Z}} id}&A\otimes_{\mathbb{Z}} B^{\prime
}\otimes_{\mathbb{Z}}\mathbb{Z}\left[\Delta ^{p}\right]\ar[r]^-{id\otimes_{\mathbb{Z}} F^{\prime }} & A\otimes_{\mathbb{Z}} A^{\prime }\ar[d]_\omega \\
B\otimes_{\mathbb{Z}} B^{\prime }\ar[0,2]^\eta&& R_{\bullet}[S^{n}]/R_{\bullet}[\ast ]}$$
The canonical anti-involution in $\mathcal{D}\mathcal{C}_{\bullet}^{n}$ is defined
similarly as that in the category $\mathcal{D}\mathcal{C}^{n}$. Denote by $\mathcal{D}%
\mathcal{C}_{\bullet (A_{k}^{n},A_{k}^{m},\mu _{n,m})}^{n+m}$ the connected
component of $\mathcal{D}\mathcal{C}_{\bullet}^{n+m}$ containing $(A_{k}^{n},A_{k}^{m},%
\mu _{n,m})$ and regard $\mathcal{D}\mathcal{C}_{(A_{k}^{n},A_{k}^{m},\mu
_{n,m})}^{n+m}$ as a constant simplicial category.
\item\label{point} The horizontal map $\mathcal{D}\Upsilon:
\mathcal{D}\mathcal{C}_{(A_{k}^{n},A_{k}^{m},\mu _{n,m})}^{n+m}\rightarrow \mathcal{D}%
\mathcal{C}_{\bullet (A_{k}^{n},A_{k}^{m},\mu _{n,m})}^{n+m}$ is induced by associating $%
f:A\rightarrow B$ to $f\otimes_{\mathbb{Z}} c:A\otimes_{\mathbb{Z}}\mathbb{Z}\left[ \Delta ^{p}\right] \rightarrow B\otimes_{\mathbb{Z}}\mathbb{Z}$, where $c:\mathbb{Z}\left[ \Delta ^{p}\right] \rightarrow\mathbb{Z}$ is the $\mathbb{Z}-$linear map induced by the constant map $\Delta ^{p}\rightarrow \ast$. It is a homotopy equivalence by the same argument as in [Wal85, Proposition 2.2.5].
 $\mathcal{C}_{\bullet A_{k}^{n}}$ denotes the connected
component of $\mathcal{C}_{\bullet}$ containing $A_{k}^{n}$ and the map $\Upsilon: \mathcal{C}_{A_{k}^{n}}\rightarrow $ $\mathcal{C}_{\bullet
A_{k}^{n}}$ is defined similarly.
  \item The three vertical maps are all induced by the projections onto the first
factors.
  \item The horizontal maps $\mathcal{D}\Sigma: \mathcal{D}\mathcal{C}_{\bullet (A_{k}^{0},A_{k}^{0},\mu
_{0,0})}^{0}\rightarrow \mathcal{D}\mathcal{C}_{\bullet (A_{k}^{n},A_{k}^{m},\mu
_{n,m})}^{n+m}$ and $\Sigma :\mathcal{C}_{\bullet A_{k}^{0}}\rightarrow C_{\bullet
A_{k}^{n}}$ are induced from the tensor product $\otimes_{\mathbb{Z}}\mathbb{Z}\lbrack S^{n}]/\mathbb{Z}\lbrack \ast ]$ and $\otimes_{\mathbb{Z}}\mathbb{Z}\lbrack S^{m}]/\mathbb{Z}\lbrack \ast ]$, respectively.
 \item Each $M\in \left( \widehat{GL}_{k}(R_{\bullet})\right) _{p}\subset M_{k}(R_{p})$
determines a unique morphism $f_{M}:A_{k}^{0}\otimes_{\mathbb{Z}}\mathbb{Z}\left[ \Delta ^{p}\right] \rightarrow A_{k}^{0}$ in the category $\mathcal{C}$ such
that $\left( f_{M}\right) _{p}(v\otimes \iota _{p})=Mv$ for any column
vector $v\in R_{p}^{k}=(A_{k}^{0})_{p}$ and the non-degenerate $p-$simplex $\iota _{p}$ of $\Delta ^{p}$. This induces the natural map $\Psi :%
\widehat{GL}_{k}(R_{\bullet})\rightarrow \mathcal{C}_{\bullet A_{k}^{0}}$ when considering $\widehat{GL}_{k}(R_{\bullet})$ as a simplicial category with one object in each simplicial degree. Similarly, $\Phi :%
\widehat{GL}_{k}(R_{\bullet})\rightarrow \mathcal{D}\mathcal{C}_{\bullet (A_{k}^{0},A_{k}^{0},\mu
_{0,0})}^{0}$ is the natural map which maps each $M\in \left( \widehat{GL}%
_{k}(R_{\bullet})\right) _{p}$ to the morphism $\left( f_{M},f_{\overline{M}%
^{T}}\right) :(A_{k}^{0},A_{k}^{0},\mu _{0,0})\rightarrow
(A_{k}^{0},A_{k}^{0},\mu _{0,0})$.
\end{enumerate}
Since the simplicial maps $\Phi$, $\mathcal{D}\Sigma$ and $\mathcal{D}\Upsilon$ in diagram (\ref{diagram:KDK}) preserve the
anti-involutions, and since $\mathcal{D}\Upsilon$ is a homotopy equivalence, then after passing to limits, the composition of $\mathcal{D}\Sigma\circ\Phi$ with a homotopy inverse of $\mathcal{D}\Upsilon$ gives rise to a map $$\eta_{X}: K(\mathbb{Z}[G(X)])\rightarrow \mathcal{D}K(\mathbb{Z}[G(X)])$$ which fits into the commutative diagrams (\ref{diagram:eta}). To prove the theorem it suffices to prove that $\eta_{X}$ is a homotopy equivalence. In fact, since the vertical map on the right in the diagram (\ref{diagram:KDK}) is a homotopy equivalence when passing to limits, and the simplicial maps $\Psi $ and $\Upsilon$ are both homotopy equivalences (c.f. \cite[p.396, Proposition 2.3.5]{Waldhausen100}), by the commutativity of the diagram (\ref{diagram:KDK}) we only need to show that the simplicial map $\Sigma $ is a homotopy equivalence.
For this, consider the following commutative diagram:
$$\xymatrix{\mathcal{C}_{\bullet A_{k}^{0}}\ar[r]^\Sigma & \mathcal{C}_{\bullet A_{k}^{n}}\\
\mathcal{C}_{\bullet}(A_{k}^{0})\ar[r]\ar[u]& \mathcal{C}_{\bullet}(A_{k}^{n})\ar[u]}$$
where $\mathcal{C}_{\bullet}(A_{k}^{n})$ denotes the simplicial monoid of simplicial $R_{\bullet}-$linear self-homotopy
equivalences of $A_{k}^{n}$ (c.f.\cite[I.7]{GJ}), the vertical maps are the
natural inclusions, and the bottom horizontal map is the restriction of the
``suspension" map $\Hom_{R_{\bullet}}(A_{k}^{0},A_{k}^{0})\rightarrow
\Hom_{R_{\bullet}}(A_{k}^{n},A_{k}^{n})$ induced from the tensor product $\mathbb{Z}\lbrack S^{n}]/\mathbb{Z}\lbrack \ast ]\otimes_{\mathbb{Z}}$, where $\Hom_{R_{\bullet}}(A_{k}^{n},A_{k}^{n})$ is the simplicial
monoid of simplicial $R_{\bullet}-$linear self-maps of $A_{k}^{n}$. Since the vertical
maps are homotopy equivalences \cite[p.396, Proposition 2.3.5]{Waldhausen100}, it remains to show
\begin{equation}\label{suspension map}
\Hom_{R_{\bullet}}(A_{k}^{0},A_{k}^{0})\rightarrow
\Hom_{R_{\bullet}}(A_{k}^{n},A_{k}^{n})
\end{equation}
is a homotopy equivalence. To see this, let $A$ be a simplicial abelian group and $\widetilde{\mathbb{Z}}[S^{n}]=\mathbb{Z}\lbrack S^{n}]/\mathbb{Z}\lbrack \ast ]$. As $A_{p}=\Hom_{\mathbb{Z}}(\mathbb{Z}\lbrack \Delta ^{p}],A)$ for each dimension $p$, define a map
\begin{equation}\label{eq:map}
A\xrightarrow{\widetilde{\mathbb{Z}}[S^{1}]\otimes_{\mathbb{Z}} }\Hom_{\mathbb{Z}}(\widetilde{\mathbb{Z}}[S^{1}],\widetilde{\mathbb{Z}}[S^{1}]\otimes A)
\end{equation}
by sending a simplicial $\mathbb{Z}-$linear map $f:\mathbb{Z}\lbrack \Delta ^{p}]\rightarrow A$ to the map $id\otimes f:\widetilde{\mathbb{Z}}[S^{1}]\otimes\mathbb{Z}\lbrack \Delta ^{p}]\rightarrow\widetilde{\mathbb{Z}}[S^{1}]\otimes A$ in each dimension $p$.

\smallskip

\textbf{Claim.} The map (\ref{eq:map}) is a homotopy equivalence.

\smallskip

\noindent Assuming this claim we prove the ``suspension" map (\ref{suspension map}) is a
homotopy equivalence. Since $$\displaystyle \Hom_{R_{\bullet}}(A_{k}^{n},A_{k}^{n})=\bigoplus_{k\times
k}\Hom_{R_{\bullet}}(A_{1}^{n},A_{1}^{n}),$$ it suffices to prove that the ``suspension" map (\ref{suspension map}) is a homotopy equivalence for $k=1$. Firstly, it is true for $n=1$
by using the claim above. Now assume it is true for $n=l-1$. Then the fact that $%
\Hom_{R_{\bullet}}(A_{1}^{0},A_{1}^{0})\rightarrow \Hom_{R_{\bullet}}(A_{1}^{l},A_{1}^{l})$ is a
homotopy equivalence just follows from that it is the composition of the
homotopy equivalences

\begin{equation*}
\Hom_{R_{\bullet}}(A_{1}^{0},A_{1}^{0})\xrightarrow{\widetilde{\mathbb{Z}}[S^{l-1}]\otimes}
\Hom_{R_{\bullet}}(A_{1}^{l-1},A_{1}^{l-1})\to\Hom_{R_{\bullet}}(A_{1}^{l-1},\Hom_{\mathbb{Z}}(\widetilde{\mathbb{Z}}[S^{1}],\widetilde{\mathbb{Z}}[S^{1}]\otimes A_{1}^{l-1}))\cong \Hom_{R_{\bullet}}(A_{1}^{l},A_{1}^{l})
\end{equation*}
where the second map is induced by the map in the claim above when $A=A_{1}^{l-1}$. Then it follows that the ``suspension" map (\ref{suspension map})  is a
homotopy equivalence for all $k\geq 1$.

It remains to prove the claim above. The
proof follows essentially from  \cite[Proposition 2.3.5]{Waldhausen100} and a simplicial version of the proof of \cite[Proposition
4.66]{Hatcher}. Let $$c:\Delta ^{1}\times (A\left[ \Delta ^{1}\right] /A\left[
\Delta ^{0}\right] )\rightarrow A\left[ \Delta ^{1}\right] /A\left[ \Delta
^{0}\right] $$ be the contraction which linearly extends the standard
contraction $\Delta ^{1}\times \Delta ^{1}\rightarrow \Delta ^{1}$, whose
restrictions to $\Delta ^{1}\times 0$ and $0\times \Delta ^{1}$ are the
projections of $\Delta ^{1}$ onto the vertex $0$ and whose restriction to $%
\Delta ^{1}\times 1$ is the identity map of $\Delta ^{1}$. Let $P(A\left[
\Delta ^{1}\right] /A\left[\partial\Delta ^{1}\right] )$ (resp. $\Omega (A\left[
\Delta ^{1}\right] /A\left[\partial \Delta ^{1}\right] )$) denote the path space
(resp. the loop space) of $A\left[ \Delta ^{1}\right] /A\left[\partial \Delta ^{1}%
\right] $ (see \cite[p.30]{GJ}). Then the contraction $c$ induces naturally
a map $A\left[ \Delta ^{1}\right] /A\left[ \Delta ^{0}\right] \rightarrow P(A%
\left[ \Delta ^{1}\right] /A\left[\partial \Delta ^{1}\right] )$ which fits into the following commutative diagram
$$\xymatrix{A\ar[0,2]\ar[d]_{\widetilde{\mathbb{Z}}[S^{1}]\otimes}&& A\left[ \Delta ^{1}\right] /A\left[\Delta ^{0}\right]\ar[d]\ar[r]& A\left[ \Delta ^{1}\right] /A\left[\partial \Delta ^{1}\right]\ar[d]^{id} \\
\Hom_{\mathbb{Z}}(\widetilde{\mathbb{Z}}[S^{1}],\widetilde{\mathbb{Z}}[S^{1}]\otimes A)\ar[r]^-{\approx}& \Omega (A\left[ \Delta ^{1}\right] /A\left[\partial \Delta
^{1}\right] )\ar[r]& P(A\left[ \Delta ^{1}\right] /A\left[\partial \Delta
^{1}\right] )\ar[r] & A\left[ \Delta ^{1}\right] /A\left[\partial \Delta ^{1}\right]}$$
where the horizontal maps are the natural fibrations. Since $A\left[ \Delta
^{1}\right] /A\left[ \Delta ^{0}\right] $ and $P(A\left[ \Delta ^{1}\right]
/A\left[\partial \Delta ^{1}\right] )$ are contractible, the five lemma implies that
the left vertical map is a weak homotopy equivalence. As the simplicial abelian groups $A$ and $\Hom_{\mathbb{Z}}(\widetilde{\mathbb{Z}}[S^{1}],\widetilde{\mathbb{Z}}[S^{1}]\otimes A)$ are Kan complexes, then the claim follows. This completes the proof of the theorem.
\end{proof}

\begin{proof}[\textbf{Proof of Theorem \ref{Theorem:AinvKinv}}]Theorem \ref{Invcoincide} implies that the homotopy equivalence $\eta_{X}: K(\mathbb{Z}[G(X)])\rightarrow\mathcal{D}K(\mathbb{Z}[G(X)])$ induces isomorphisms $$\eta_{X\ast}:\widetilde{K}_{i}(\mathbb{Z}[G(X)])\overset{\cong}{\rightarrow} \pi_{i}(\mathcal{D}K(\mathbb{Z}[G(X)]),\mathcal{D}K(\mathbb{Z}))$$ such that the following diagram commutes
$$\xymatrix{\widetilde{K}_{i}(\mathbb{Z}[G(X)])\ar[r]^-{\eta_{X\ast}}\ar[d]_{\tau_{BF\ast}}&\pi_{i}(\mathcal{D}K(\mathbb{Z}[G(X)]),\mathcal{D}K(\mathbb{Z}))\ar[d]^{\tau_{V\ast}}\\
\widetilde{K}_{i}(\mathbb{Z}[G(X)])\ar[r]^-{\eta_{X\ast}}&\pi_{i}(\mathcal{D}K(\mathbb{Z}[G(X)]),\mathcal{D}K(\mathbb{Z}))}$$for all $i$.
This, together with Lemma \ref{Lemma:VInv} completes the proof.
\end{proof}

\subsection{The relation between the involutions on $K_{\ast }(\mathbb{Z}[G(X)])$ and $H_{i}^{S^{1}}(L\left\vert X\right\vert )$}

Let $X$ be a simply-connected simplicial set and let $HD_{i}(X)$ denote
the $i$-th dihedral homology for the simplicial Hermitian ring $\mathbb{Z}[G(X)]$ (see for example \cite[p.193]{Lodder96}). An anti-equivariant isomorphism $\widetilde{K}_{i+1}(\mathbb{Z}[G(X)])\otimes\mathbb{Q}\cong H_{i}^{S^{1}}(L\left\vert X\right\vert ,\ast ;\mathbb{Q})$ with respect to the involution $\tau_{BF\ast}$ and the geometric involution $T_{\ast}$, can be obtained by recalling the following results: On one hand, applying
the results of \cite[p.195]{Lodder96} directly to the Hermitian simplicial ring homomorphism $\mathbb{Z}[G(X)]\rightarrow\mathbb{Z}$ induced by $G(X)\rightarrow \ast$, one can see there is an isomorphism
\begin{equation*}
\Inv^{-}\widetilde{K}_{i+1}(\mathbb{Z}[G(X)])\otimes\mathbb{Q}\cong \widetilde{HD}_{i}(X)\otimes\mathbb{Q}:=HD_{i}(X)\otimes\mathbb{Q}/HD_{i}(\ast )\otimes\mathbb{Q}.
\end{equation*}
On the other hand, Dunn \cite{Dunn} proves that $HD_{i}(X)\otimes\mathbb{Q}$ is isomorphic to $H_{i}^{O(2)}(L\left\vert X\right\vert ;\mathbb{Q})$, where $O(2)$ acts on $L|X|$ by reparametrization. A modification of the proof of \cite[3.3.3 Theorem]{Lodder90} can
also show this.
These facts, together with the isomorphism (obtained by a transfer map argument)
\begin{equation*}
H_{i}^{O(2)}(L\left\vert X\right\vert ;\mathbb{Q})\cong \Inv^{+}H_{i}^{S^{1}}(L\left\vert X\right\vert ;\mathbb{Q}),
\end{equation*}
imply that $$\Inv^{-}\widetilde{K}_{i+1}(\mathbb{Z}[G(X)])\otimes\mathbb{Q}\cong \Inv^{+}H_{i}^{S^{1}}(L\left\vert X\right\vert ,\ast ;\mathbb{Q}).$$ Note that $\dim\widetilde{K}_{i}(\mathbb{Z}[G(X)])\otimes\mathbb{Q}$ is finite for each $i$ when the geometric realization $|X|$ has the homotopy type of a simply-connected compact manifold \cite{Dwyer,WaldhausenI1978}, and since $\widetilde{K}_{i+1}(\mathbb{Z}[G(X)])\otimes\mathbb{Q}\cong H_{i}^{S^{1}}(L\left\vert X\right\vert ,\ast ;\mathbb{Q})$ by \cite{B,BF,Goodwillie85,Goodwillie86}, then the following theorem holds.
\begin{theorem}\label{Theorem:KS1}
Let $X$ be a simply-connected simplicial set whose geometric realization $|X|$ has the homotopy type of a compact manifold. Then there is an isomorphism $\xymatrix{\widetilde{K}_{i+1}(\mathbb{Z}[G(X)])\otimes\mathbb{Q}\ar[r]^-{\cong}& H_{i}^{S^{1}}(L\left\vert X\right\vert ,\ast ;\mathbb{Q})}$ for all $i$ such that the following diagram commutes:$$\xymatrix{\widetilde{K}_{i+1}(\mathbb{Z}[G(X)])\otimes\mathbb{Q}\ar[r]^-{\cong}\ar[d]_-{\tau_{BF\ast}}& H_{i}^{S^{1}}(L\left\vert X\right\vert ,\ast ;\mathbb{Q})\ar[d]^{-T_{\ast}}\\\widetilde{K}_{i+1}(\mathbb{Z}[G(X)])\otimes\mathbb{Q}\ar[r]^-{\cong}& H_{i}^{S^{1}}(L\left\vert X\right\vert ,\ast ;\mathbb{Q})}$$

\end{theorem}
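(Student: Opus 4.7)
The plan is to combine the three ingredients that the discussion immediately preceding the theorem has already assembled: (a) the abstract (non-equivariant) isomorphism $\widetilde{K}_{i+1}(\mathbb{Z}[G(X)])\otimes\mathbb{Q}\cong H_{i}^{S^{1}}(L|X|,\ast;\mathbb{Q})$ due to Burghelea, Burghelea--Fiedorowicz and Goodwillie; (b) the equivariant identification
\begin{equation*}
\mathrm{Inv}^{-}_{\tau_{BF\ast}}\widetilde{K}_{i+1}(\mathbb{Z}[G(X)])\otimes\mathbb{Q}\;\cong\;\mathrm{Inv}^{+}_{T_{\ast}} H_{i}^{S^{1}}(L|X|,\ast;\mathbb{Q})
\end{equation*}
obtained from Lodder's Hermitian K-theory computation combined with Dunn's identification $HD_{i}(X)\otimes\mathbb{Q}\cong H_{i}^{O(2)}(L|X|;\mathbb{Q})$ and the standard $H_{i}^{O(2)}\cong \mathrm{Inv}^{+}H_{i}^{S^{1}}$; and (c) the finite-dimensionality of both groups, provided by Dwyer's theorem since $|X|$ is a simply-connected compact manifold.

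First I would invoke the elementary fact that, in characteristic zero, any involution $\sigma$ on a finite-dimensional $\mathbb{Q}$-vector space $V$ induces a direct sum decomposition $V=\mathrm{Inv}^{+}_{\sigma}V\oplus \mathrm{Inv}^{-}_{\sigma}V$, so $\dim V=\dim \mathrm{Inv}^{+}_{\sigma}V+\dim \mathrm{Inv}^{-}_{\sigma}V$. Applying this on both sides of the abstract isomorphism (a) and subtracting the matched eigenspace dimensions from (b) gives
\begin{equation*}
\dim \mathrm{Inv}^{+}_{\tau_{BF\ast}}\widetilde{K}_{i+1}(\mathbb{Z}[G(X)])\otimes\mathbb{Q}\;=\;\dim \mathrm{Inv}^{-}_{T_{\ast}} H_{i}^{S^{1}}(L|X|,\ast;\mathbb{Q}).
\end{equation*}

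Second, since $\mathrm{Inv}^{\pm}_{-T_{\ast}}=\mathrm{Inv}^{\mp}_{T_{\ast}}$, the two displayed identities say exactly that the $\pm$-eigenspace dimensions of $\tau_{BF\ast}$ on the left-hand side coincide with those of $-T_{\ast}$ on the right-hand side. Because finite-dimensional $\mathbb{Q}$-vector spaces equipped with an involution are classified up to intertwining isomorphism by the pair of eigenspace dimensions, I would then build the required isomorphism by choosing ordered bases of $\mathrm{Inv}^{+}$ and $\mathrm{Inv}^{-}$ on each side and declaring the matching to be a linear bijection; the resulting map is a $\mathbb{Q}$-linear isomorphism $\widetilde{K}_{i+1}(\mathbb{Z}[G(X)])\otimes\mathbb{Q}\to H_{i}^{S^{1}}(L|X|,\ast;\mathbb{Q})$ intertwining $\tau_{BF\ast}$ with $-T_{\ast}$, which is exactly the commutativity of the stated diagram.

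The argument has essentially no obstacle once (b) is in hand; the work has already been done in the paragraph preceding the theorem statement and in the cited results of Lodder, Dunn, and Dwyer. The one point I would emphasize is that the theorem only asserts existence of \emph{some} intertwining isomorphism, not that the canonical Burghelea--Goodwillie isomorphism is itself anti-equivariant; proving the latter would require tracking the involutions through the Hochschild and cyclic homology spectral sequences used to establish (a), which would be considerably more delicate. For the stated existence version, dimension-counting over $\mathbb{Q}$ suffices.
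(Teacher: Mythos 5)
Your proposal is correct and follows essentially the same route the paper takes: it uses Lodder's and Dunn's dihedral-homology results to get the matching $\mathrm{Inv}^{-}_{\tau_{BF\ast}}\widetilde{K}_{i+1}\otimes\mathbb{Q}\cong \mathrm{Inv}^{+}_{T_{\ast}}H^{S^1}_i(L|X|,\ast;\mathbb{Q})$, combines this with the abstract Burghelea--Goodwillie isomorphism and Dwyer's finite-dimensionality theorem, and then closes by eigenspace dimension counting over $\mathbb{Q}$ to produce an intertwining isomorphism. You also correctly identify that the theorem asserts only the \emph{existence} of such an anti-equivariant isomorphism rather than the anti-equivariance of any particular canonical map, which is precisely how the paper reads.
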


\begin{proof}[\textbf{Proof of Theorem \ref{AthS1}}] It follows immediately from  Theorems \ref{Theorem:AinvKinv} and \ref{Theorem:KS1}.
\end{proof}

\begin{remark}Lodder uses two apparently different definitions of dihedral homologies in \cite{Lodder90} and \cite{Lodder96}. However a straightforward argument shows that they are equivalent.
\end{remark}
\section{The involution on the pseudoisotopy space}\label{InvPseudo}
Throughout this section let $M$ be an orientable
compact smooth manifold, possibly with boundary. The goal of this section is to introduce the involution on pseudoisotopy spaces and to prove Proposition \ref{equivariantshortexact}, which relates the involution $\iota^S$ on the homotopy groups of $\mathscr{P}(M)$ to the involution $\tau_{\varepsilon}$ on $A(M)$. The proof is based on Waldhausen's manifold approach to $A(M)$
\cite{Waldhausenmfd} and arguments in \cite{Vogelllong}. We prove Theorem \ref{formula} at the end of this section.

A pseudoisotopy of $M$ is
defined to be a diffeomorphism of $M\times [0,1]$ which is the identity on $%
M\times 0\cup \partial M\times[0,1]$. Then the pseudoisotopy space is the
group of all such diffeomorphisms equipped with the smooth topology, i.e.,
\begin{equation*}
P(M):=\Diff(M\times[0,1], rel\ M\times 0\cup \partial M\times[0,1]).
\end{equation*} By Igusa's stabilization theorem \cite{Igusa88} \cite[Theorem 6.2.2]{Igusa02}, the natural
stabilization map
\begin{equation*}
\Sigma: P(M)\rightarrow P(M\times [0,1])
\end{equation*}
is $k-$connected if $\dim M\geq \max\{2k+7,3k+4\}$. Then the
stabilization
\begin{equation*}
P(M)\rightarrow P(M\times [0,1])\rightarrow P(M\times
[0,1]^{2})\rightarrow\cdots
\end{equation*}
is eventually an isomorphism on homotopy groups. Define the stable
pseudoisotopy space as the direct limit
\begin{equation*}
\mathscr{P}(M):=\underset{m}{\underrightarrow{\lim}}P(M\times [0,1]^{m}).
\end{equation*}
Following \cite[p.296]{Vogelllong}, there is a canonical involution $\iota:
P(M)\rightarrow P(M)$ given by
\begin{equation*}
\iota(f)=(id_{M}\times\ r)\circ f \circ (id_{M}\times\ r)\circ((f|_{M\times
1})^{-1}\times id_{[0,1]})
\end{equation*}
where $r:[0,1]\rightarrow [0,1]$ is the reflection given by $r(t)=1-t$.
Since the stabilization induces a map $\pi_{*}P(M)\rightarrow \pi_{*}P(M\times[0,1])$
which anti-commutes with the canonical involution (c.f. \cite{Hatcher78} or \cite[Proposition 6.2.1, Lemma 6.5.1(2)]{Igusa02}), then define the involution $\iota_{\ast}^{S}$ on $\pi_{*}\mathscr{
P}(M)$ to be the one compatible with the involution $(-1)^{\dim M}\iota_{*}$
on $\pi_{*}P(M)$ (c.f. \cite[p.251]{Igusa02}), namely the following diagram commutes:
$$\xymatrix{\pi_{k}P(M)\ar[r]\ar[d]_{(-1)^{\dim M}\iota_{\ast}}&\pi_{k}\mathscr{P}(M)\ar[d]_{\iota^{S}_{\ast}}\\
\pi_{k}P(M)\ar[r]&\pi_{k}\mathscr{P}(M)} $$

The link between this involution on $\pi_{\ast}\mathscr{P}(M)$ and the involutions on $K$-theory (of spaces) appears through  Waldhausen's manifold approach to $A(M)$ \cite{Waldhausenmfd,WJR} and Vogell's work on involutions\cite{Vogelllong}; which we review now.

Given a compact manifold $M$ with boundary $\partial M$, a partition is
a triple $(M_{0},F,M_{1})$ of manifolds, as shown in Figure \ref{partition},
\begin{figure}[!h]
\vspace*{0cm}
    \begin{center}
    \includegraphics[scale=0.6]{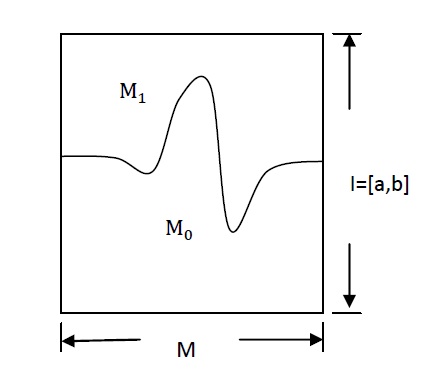}
    \caption{\small A partition $(M_{0},F,M_{1})$ }\label{partition}
    \end{center}
 \end{figure}
where $M_{0}$ is a codimension $0$
submanifold of $M\times I$ with $I=[a,b]$, $M_{1}$ is the closure of the complement of $M_{0}$ and $F$ is the intersection $M_{0}\cap M_{1}$. Furthermore, the frontier $F$ is required to be disjoint
from the bottom $M\times a$ and the top $M\times b$ and to be standard near $\partial M\times I$ in the sense that there exists a neighborhood $W$ of $\partial
M\times I$ satisfying that $W\cap F$ is equal to $W\cap M\times \{t\}$ for some $t\in I$. Let $\calP(M)$ be the simplicial set whose $%
k- $simplicies are locally trivial families of partitions $(M_{0},F,M_{1})$
(parameterized by the standard simplex $\Delta ^{k}$) and let $\underline{\calP}%
(M)$ be the simplicial subset of $\calP(M)$ of those partitions $(M_{0},F,M_{1})$
satisfying that $p[\partial F]$ is the minimum of $p[F]$ where $%
p:M\times I\rightarrow I$ is the projection onto the second factor.

A partial ordering can be defined by letting $(M_{0},F,M_{1})<(M_{0}^{\prime
},F^{\prime },M_{1}^{\prime })$ if $M_{0}\subset M_{0}^{\prime }$ and the
two inclusion maps%
\begin{equation*}
F\rightarrow M_{0}^{\prime }-(M_{0}-F)\leftarrow F^{\prime }
\end{equation*}%
are homotopy equivalences. This partial ordering on $\calP(M)$ (resp. $%
\underline{\calP}(M)$) defines a simplicial partially ordered set, and hence a
simplicial category which is denoted by $h\calP(M)$ (resp. $h\underline{\calP}(M)$).
Let $h\mathcal{P}_{k}^{l,m}(M)$ (resp. $h\underline{\calP}_{k}^{l,m}(M)$) be the connected
component of $h\calP(M)$ (resp. $h\underline{\calP}(M)$) containing the particular
partition given by attaching $k$ $m-$handles trivially to $M\times \lbrack
a,a^{\prime }]$ on $M\times a^{\prime }$ in such a way that the
complementary $l-$handles are trivially attached to $M\times \lbrack
b^{\prime },b]$ on $M\times b^{\prime }$, $a<a^{\prime }<b^{\prime }<b$.

Let $d=\dim M$ and let $J$ be a closed interval and consider the limit $$\underset{k,l,m}{\underrightarrow{\lim}}h\underline{\calP}_{k}^{l,m}(M\times J^{l+m-d})$$
where the maps in the direct system are given by the lower stabilization map
\begin{equation}\label{eq:lowerstab}
\sigma _{l}:h\underline{\calP}_{k}^{l,m}(M\times J^{l+m-d})\rightarrow h%
\underline{\calP}_{k}^{l+1,m}(M\times J^{l+m+1-d})
\end{equation}
and the upper stabilization map%
\begin{equation}\label{eq:upperstab}
\sigma _{u}:h\underline{\calP}_{k}^{l,m}(M\times J^{l+m-d})\rightarrow h%
\underline{\calP}_{k}^{l,m+1}(M\times J^{l+m+1-d})
\end{equation}
which are defined in \cite[p.298]{Vogelllong}.

By a slight modification of the (anti-)involution on $h\calP(M)$ which is
induced by turning a partition upside down \cite[p.297]{Vogelllong}, one obtains an
involution $\mathscr{T}$ (up to homotopy) on $h\underline{\calP}(M)$ which satisfies $%
\sigma _{l}\circ \mathscr{T}\simeq \mathscr{T}\circ \sigma _{u}$ and $\sigma _{u}\circ \mathscr{T}\simeq
\mathscr{T}\circ \sigma _{l}$, and hence $\mathscr{T}$ induces an involution $$\underset{k,l,m}{\underrightarrow{\lim}}h\underline{\calP}_{k}^{l,m}(M\times J^{l+m-d})\rightarrow \underset{k,l,m}{\underrightarrow{\lim}}h\underline{\calP}_{k}^{l,m}(M\times J^{l+m-d}).$$

Denote by $\calP_{k}^{l,m}(M)$ (resp. $\underline{\calP}_{k}^{l,m}(M)$) the
simplicial set of the objects of the simplicial category $h\mathcal{P}_{k}^{l,m}(M)$
(resp. $h\underline{\calP}_{k}^{l,m}(M)$). Then similar to the above, one also
gets the involution on the following limits $$\underset{k,l,m}{\underrightarrow{\lim}}
\mathcal{P}_{k}^{l,m}(M\times J^{l+m-d}), \underset{l,m}{\underrightarrow{\lim}}
\calP_{0}^{l,m}(M\times J^{l+m-d}), \underset{k,l,m}{\underrightarrow{\lim}}
\underline{\calP}_{k}^{l,m}(M\times J^{l+m-d}), \underset{
l,m}{\underrightarrow{\lim}}\underline{\calP}_{0}^{l,m}(M\times J^{l+m-d}).$$

Let $\left\vert \ \  \right\vert ^{+}$ denote the space obtained by
performing the plus construction (with respect to a maximal normal perfect subgroup of the fundamental group) on the geometric realization $\left\vert
\ \  \right\vert $. (Note that if $S$ is a simplicial category, then $\left\vert S \right\vert $ denote the classifying space.) By \cite{Waldhausenmfd},
the inclusion-induced map
\begin{equation}\label{fiberation}
\left\vert \underset{\overrightarrow{k,l,m}}{\lim }\underline{\calP}%
_{k}^{l,m}(M\times J^{l+m-d})\right\vert ^{+}\rightarrow \left\vert \underset%
{\overrightarrow{k,l,m}}{\lim }h\underline{\calP}_{k}^{l,m}(M\times
J^{l+m-d})\right\vert ^{+}
\end{equation}
has a homotopy
fiber $\left\vert \underset{\overrightarrow{l,m}}{\lim }\underline{H}%
(M\times J^{l+m-d})\right\vert $ where $\underline{H}(M\times J^{l+m-d})=%
\underline{\calP}_{0}^{l,m}(M\times J^{l+m-d})$. Moreover, the map (\ref{fiberation}) is compatible with the involutions induced by $\mathscr{T}$ \cite[p.299]{Vogelllong}. Furthermore it is proved in \cite{Waldhausenmfd} that a component of $A(M)$
is homotopy equivalent to
$\left\vert \underset{\overrightarrow{k,l,m}}{\lim }h\underline{\calP}_{k}^{l,m}(M\times
J^{l+m-d})\right\vert ^{+}$
and from \cite[Corollary 2.10]{Vogelllong} the involution induced by $\mathscr{T}$ is compatible (after taking homotopy groups) with the involution $\tau_{\xi}$ on $A(M)$ corresponding to the sphere bundle $\xi$ associated to the stable tangent bundle of $M$. Therefore the map (\ref{fiberation}) induces a long exact sequence
\begin{equation}\label{eq:longexact}
{\small\xymatrix{
\cdots\ar[r]&\pi _{i+2}\left\vert \underset{\overrightarrow{k,l,m}}{\lim
}\underline{\calP}_{k}^{l,m}(M\times J^{l+m-d})\right\vert ^{+}\ar[r]^-{\alpha
^{\prime }}&\pi _{i+2}A(M)\ar[r]^-{\gamma}&\pi _{i+1}\left\vert
\underset{\overrightarrow{l,m}}{\lim }\underline{H}(M\times
J^{l+m-d})\right\vert\ar[r]&\cdots
}
}
\end{equation}
which is compatible with the involution $\tau_{\xi\ast}$ and the involutions induced by $\mathscr{T}$ on $\pi _{\ast}\left\vert \underset{\overrightarrow{k,l,m}}{\lim
}\underline{\calP}_{k}^{l,m}(M\times J^{l+m-d})\right\vert ^{+}$ and $\pi _{\ast}\left\vert
\underset{\overrightarrow{l,m}}{\lim }\underline{H}(M\times
J^{l+m-d})\right\vert.$

It turns out that $$h^{DIFF}(M):=\left\vert \underset{\overrightarrow{k,l,m}}{\lim
}\underline{\calP}_{k}^{l,m}(M\times J^{l+m-d})\right\vert ^{+}$$
is a homology theory. This is seen by first showing the corresponding functor $h^{PL}$ defined in terms of PL manifolds is a homological functor, and then using smoothing theory to obtain the desired conclusion. The proof was outlined by Waldhausen in [Wa82, Proposition 5.5]. The argument was completed by Waldhausen-Jahren-Rognes (see Proposition 1.4.8 and the subsequent discussion in \cite{WJR}). Furthermore the homology theory $h^{DIFF}(\ )$ is identified with stable homotopy theory \cite{Waldhausen83}, that is, there is a weak homotopy equivalence
\beq
h^{DIFF}(M)\simeq\Omega^{\infty}\Sigma^{\infty}(M_+).
\eeq
A more concrete description of this equivalence is given via the \textit{Gauss map} \cite[p.157]{Waldhausenmfd} (which we discuss further below)

\beq
\theta: h^{DIFF}(M)\to\Omega^{\infty}\Sigma^{\infty}(M_+)
\eeq
Note that Waldhausen \cite[p.157]{Waldhausenmfd} shows that $\theta$ is a homotopy retract, but since $M$ is compact, and the domain and the target are weakly equivalent, the map $\theta$ itself must be a weak equivalence.

Observe also that the sequence (\ref{eq:longexact}) splits by \cite[p.153]{Waldhausenmfd}. In other words there is a homomorphism
$$q^{\prime }:\pi _{i+2}A(M)\to\pi _{i+2}h^{DIFF}(M)$$
such that $q^{\prime }\circ \alpha ^{\prime }=id$.

Let now
\begin{equation*}
\alpha :=\alpha ^{\prime }\circ \theta _{\ast }^{-1}:\pi _{i+2}^{s}(M_{+})\rightarrow\pi
_{i+2}A(M).
\end{equation*}
and $H(M\times
J^{l+m-d}):=\left\vert \calP_{0}^{l,m}(M\times
J^{l+m-d})\right\vert $. The proof of \cite[p.297, Proposition 2.2]{Vogelllong} shows that there is a homotopy equivalence $$\xymatrix{\Omega H(M\times J^{2L})\ar[r]^-{\simeq}&P(M\times J^{2L})}$$
inducing isomorphisms
\begin{equation}\label{eq:h-space-pseudo}
\xymatrix{\pi _{i+1}H(M\times J^{2L})\ar[r]^-{\cong}&\pi _{i}P(M\times J^{2L})}
\end{equation}
for all $i$. Moreover, by Igusa's stabilization theorem
\cite{Igusa88}, when $L>>0$, there are isomorphisms
\begin{equation}\label{eq:h-spaceisom}
  \xymatrix{\pi _{i+1}\left\vert\underset{l,m}{\underrightarrow{\lim}}\underline{H}(M\times J^{l+m-d})\right\vert&\ar[l]_-{\cong}\pi_{i+1}\left\vert\underline{H}(M\times J^{2L})\right\vert\ar[r]^{\cong}&\pi_{i+1}H(M\times J^{2L})}
\end{equation}
and
\begin{equation}\label{eq:pseudoisotopy}
  \xymatrix{\pi _{i}P(M\times J^{2L})\ar[r]^-{\cong}&\pi_{i}\mathscr{P}(M)}
\end{equation}
The composition of the map $\gamma$ in the sequence (\ref{eq:longexact}) and the isomorphisms (\ref{eq:h-space-pseudo}), (\ref{eq:h-spaceisom}) and (\ref{eq:pseudoisotopy}) give a homomorphism $\beta:\pi_{i+2}A(M)\to\pi_{i}\mathscr{P}(M)$. All together, we have a short exact sequence
\begin{equation}\label{eq:Waldhausen-SES}
0\rightarrow \pi^{s}_{i+2}(M_+)\xrightarrow{\alpha}\pi_{i+2}A(M)%
\xrightarrow{\beta} \pi_{i}\mathscr{P}(M)\rightarrow 0
\end{equation}
for each $i\geq 0$.
This sequence splits as the sequence (\ref{eq:longexact}) splits.

The next proposition explains the behavior of the short exact sequence (\ref{eq:Waldhausen-SES}) with respect to the involutions already mentioned. Recall that given a $d-$spherical fibration $\eta\to M$, the involution $\tau_{\eta}:A(M)\to A(M)$ is defined  \cite[p.300]{Vogelllong}. For the trivial fibration $M\times S^0\to M$, this involution is denoted by $\tau_{\varepsilon}$.

\begin{proposition}\label{equivariantshortexact}
The following two conditions are satisfied :\begin{enumerate}
                                              \item[(1)] the  equation
\beq
\tau_{\varepsilon\ast}\circ\alpha=\alpha
\eeq
holds after tensoring with $\Q$.
 \item[(2)]the homomorphism $\beta$ makes the following diagram commute:$$\xymatrix{\pi_{i+2}^{\mathbb{Q}}A(M)\ar[r]^{\beta}\ar[d]_{-\tau_{\varepsilon\ast}}&\pi_{i}^{\mathbb{Q}}\mathscr{P}(M)\ar[d]^{\iota^{\ast}_{S}}\\
                                                 \pi_{i+2}^{\mathbb{Q}}A(M)\ar[r]^{\beta}&\pi_{i}^{\mathbb{Q}}\mathscr{P}(M)}$$
                                            \end{enumerate}
\end{proposition}

To prove Proposition \ref{equivariantshortexact}, we need Lemmas \ref{difference-1} and \ref{pre-equivariantshortexact} below.\footnote{We thank Kristian Moi for indicating to us the proof of Lemma \ref{difference-1}.}

\begin{lemma}\label{difference-1}
Let $M$ be a smooth oriented $d$-manifold and let $\xi\to M$ be the sphere bundle associated to the stable tangent bundle $TM\oplus\varepsilon^1$. Then the following equation holds on all rational homotopy groups of $A(M)$:
\beq
\tau_{\xi\ast}=(-1)^d\cdot\tau_{\varepsilon\ast}
\eeq
\end{lemma}
\begin{proof}
By abuse of notation, $M$ will denote both a manifold and its singular complex. Let $G=G(M)$ be the Kan loop group of $M$ and $G\to P\to M$ its universal principal fibration (in particular $P$ is contractible). Let
$\mathcal{R}_{hf}(W,G)$ be the category of $G$-simplicial sets having $W$ as a retract and satisfying that the geometric realization of every object has the $|G|$-homotopy type  of a finite $|G|$-free $CW$ complex relative to $W$ \cite[pp. 377-379]{Waldhausen100}. Waldhausen also shows that
both the category $\mathcal{R}_{hf}(M)$ of retractive spaces over $M$ and the category $\mathcal{U}_{hf}(G):=\mathcal{R}_{hf}(\ast,G)$ of pointed $G$-sets are categories with cofibrations and weak equivalences, on which homotopy equivalent models for $A(M)$ can be built. Moreover, the linearization map $A(M)\to K(\mathbb{Z} [G])$ is induced by the following composition
\begin{equation*}
L:\mathcal{R}_{hf}(M)\xrightarrow{\psi}\mathcal{R}_{hf}(P,G)\xrightarrow{\varphi}\mathcal{U}_{hf}(G)\xrightarrow{\ell}\mathcal{M}(\mathbb{Z} [G])
\end{equation*}
where $\mathcal{M}(\mathbb{Z}[G])$ is the category of simplicial $\mathbb{Z}[G]$-modules defined in Section \ref{Section:VInv}, the map $\psi$ is defined in \cite[Lemma 2.1.3]{Waldhausen100}, and the map $\varphi$ maps an object $Y$ to $Y\times_M P/M\times_MP$ \cite[p.382]{Waldhausen100}. The map $\ell$ is the ``obvious'' linearization map sending a simplicial $G$-set to the free simplicial $\mathbb{Z} [G]$-module generated by the non-basepoint elements.

Fiberwise smash product with $\xi$ defines a functor \cite[p.281]{Vogelllong}
\begin{equation*}
\xi\cdot: \mathcal{R}_{hf}(M)\to\mathcal{R}_{hf}(M)
\end{equation*}
which induces a map in $K$-theory so that the following
diagram commutes up to homotopy \cite[Proposition 2.5]{Vogelllong}
\begin{equation*}
\xymatrix{
&A(M)\ar[dr]_{\tau_{\xi}}\ar[r]^{\tau_{\varepsilon}} & A(M)\ar[d]^{\xi\cdot}\\
&&A(M)
}
\end{equation*}
Since the linearization map $L:A(M)\to K(\mathbb{Z} [G])$ is a rational equivalence, it suffices to show that $\xi\cdot$ induces, after linearization and rationalization, multiplication by $(-1)^d$ on the rational $K$-groups $K_*(\mathbb{Z}[G])\otimes\mathbb{Q}$ of the simplicial ring $\mathbb{Z}[G]$.

A direct calculation shows that the following diagram commutes
\begin{equation*}
\xymatrixcolsep{5pc}\xymatrix{
&\mathcal{R}_{hf}(M)\ar[d]^{\psi}\ar[r]^-{\xi\cdot}&\mathcal{R}_{hf}(M)\ar[d]^{\psi}\\
&\mathcal{R}_{hf}(P,G)\ar[d]^{\varphi}\ar[r]^-{\xi^P\cdot}&\mathcal{R}_{hf}(P,G)\ar[d]^{\varphi}\\
&\mathcal{U}_{hf}(G)\ar[d]^{\ell}\ar[r]^-{\wedge\Th(\xi^P)}&\mathcal{U}_{hf}(G)\ar[d]^{\ell}\\
&\mathcal{M}(\mathbb{Z}[G])\ar[r]^-{\otimes_{\mathbb{Z}}\widetilde{\mathbb{Z}}[\Th(\xi^P)]}&\mathcal{M}(\mathbb{Z}[G])\\
}
\end{equation*}
where $\xi^P=\xi\times_M P$ and $\Th(\xi^P):=\xi^P/M\times_MP$. In conclusion, fiberwise smashing with $\xi$ amounts, after linearization and rationalization, to tensoring with the simplicial module $\widetilde{\mathbb{Z}}[\Th(\xi^P)]$. Thus it suffices to show that the latter induces multiplication by $(-1)^d$ at the level of homotopy groups. In fact, let $\xi$, $M$ and $K(\mathbb{Z},d)$ be given the trivial $G$-actions. Since $(\xi^P,P)$ is a principal $G$-bundle over $(\xi,M)$, the projection $(\xi^P,P)\to(\xi,M)$ is $G$-equivariant and hence gives a $G-$equivariant map $\Th(\xi^P)\rightarrow \Th(\xi):=\xi/M$. Composing this map with a representative map $\Th(\xi)\to K(\mathbb{Z},d)$ for the Thom class of the oriented $S^d-$bundle $\xi\rightarrow M$, gives a $G-$equivariant map $\mathfrak{t}:\Th(\xi^P)\to K(\mathbb{Z},d)$
which induces a map of simplicial $\mathbb{Z}[G]-$modules$$\tilde{\mathfrak{t}}:\widetilde{\mathbb{Z}}[\Th(\xi^P)]\to\widetilde{\mathbb{Z}}[S^d]$$
where we have used that $\widetilde{\mathbb{Z}}[S^d]$ is a model for $K(\mathbb{Z},d)$.
Since $\mathfrak{t}$ represents the Thom class of the $S^d-$bundle $\xi^P$ over the contractible base $P$, $\Th(\xi^P)$ is homotopy equivalent to $S^d$ and the class of $\mathfrak{t}$ generates $H^d(\Th(\xi^P))=\mathbb{Z}$, and hence $\mathfrak{t}$ induces isomorphisms on all homotopy groups. Since the homomorphism $\pi_{\ast}(\tilde{\mathfrak{t}})$ on the homotopy groups can be identified with the isomorphism $\pi_{\ast}(\mathfrak{t})$ through Hurewicz isomorphism $\pi_{\ast}\Th(\xi^P)\cong\pi_{\ast}\widetilde{\mathbb{Z}}[\Th(\xi^P)]$, it follows that $\tilde{\mathfrak{t}}$ is a weak homotopy equivalence of simplicial $\mathbb{Z}[G]$-modules.

Therefore, by \cite[Lemma 1.3.1]{Waldhausen100}, the functors $\otimes_{\mathbb{Z}}\widetilde{\mathbb{Z}}[\Th(\xi^P)]$ and $\otimes_{\mathbb{Z}}\widetilde{\mathbb{Z}}[S^d]$ on the category $\mathcal{M}(\mathbb{Z}[G])$ induce the same map in $K$-theory. It is now an easy consequence of the additivity theorem that $\otimes_{\mathbb{Z}}\widetilde{\mathbb{Z}}[S^d]:K(\mathbb{Z}[G])\to K(\mathbb{Z}[G])$ induces multiplication by $(-1)^d$ in homotopy groups. This completes the proof of the lemma.
\end{proof}

\begin{lemma}\label{pre-equivariantshortexact}
Let $M$ and $\xi$ be as in Lemma \ref{difference-1}. Then
the following two conditions are satisfied :\begin{enumerate}
                                              \item[(1)]if $M$ is a codimension $0$ submanifold of $\mathbb{R}^d$, then the equation $\tau_{\xi\ast}\circ\alpha=(-1)^{d}\alpha$ holds.
                                              \item[(2)]the homomorphism $\beta$ makes the following diagram commute:$$\xymatrix{\pi_{i+2}A(M)\ar[r]^{\beta}\ar[d]_{(-1)^{d+1}\tau_{\xi\ast}}&\pi_{i}\mathscr{P}(M)\ar[d]_{\iota_{\ast}^{S}}\\
                                                 \pi_{i+2}A(M)\ar[r]^{\beta}&\pi_{i}\mathscr{P}(M)}$$
                                            \end{enumerate}
\end{lemma}
\begin{proof}
First observe that the isomorphisms (\ref{eq:h-space-pseudo}) anti-commute with the involutions, namely, the following diagram commutes for all $i$:
$$\xymatrix{\pi _{i+1}H(M\times J^{2L})\ar[r]^-{\cong}\ar[d]&\pi _{i}P(M\times J^{2L})\ar[d]^-{-\tau_{\ast}}\\\pi _{i+1}H(M\times J^{2L})\ar[r]^-{\cong}&\pi _{i}P(M\times J^{2L})}$$
where the left vertical map is the involution induced by $\mathscr{T}$.

The isomorphisms (\ref{eq:h-spaceisom})
are compatible with the involutions, and the stabilization induces an isomorphism
\begin{equation}
  \xymatrix{\pi _{i}P(M\times J^{2L})\ar[r]^-{\cong}&\pi_{i}\mathscr{P}(M)}
\end{equation}
which is compatible with involutions $\iota_{\ast}$ and $\iota_{\ast}^S$ up to $(-1)^{\dim M}$.
Condition (2) now follows since $\gamma$ is compatible with the involution $\tau_{\xi}$.

It remains to show the condition (1), namely $\tau_{\xi*}\circ\alpha=(-1)^{d}\alpha$ when $M$ is a codimension $0$ submanifold of $\mathbb{R}^d$. Since $\alpha =\alpha ^{\prime }\circ \theta
_{\ast }^{-1}$ and $\alpha ^{\prime }$ is compatible with the involutions $\tau_{\xi\ast}$ and the induced involution $\mathscr{T}_{\ast}$ on
$\pi_{i+2}h^{DIFF}(M)$, it suffices to show that the following diagram commutes:
\begin{equation}\label{eq:thetadiag}
\xymatrix{
\pi _{i+2}h^{DIFF}(M)\ar[r]^-{\theta_{\ast}}\ar[d]^-{\mathscr{T}_{\ast}}&\pi _{i+2}^s(M_{+})\ar[d]^-{\cdot(-1)^d}\\\pi _{i+2} h^{DIFF}(M)\ar[r]^-{\theta_{\ast}}&\pi _{i+2}^s(M_{+})}
\end{equation}

To see this we need to understand how the map
\begin{equation*}
\xymatrix{\theta :h^{DIFF}(M)\ar[r]^-{\simeq}& \Omega ^{\infty
}\Sigma ^{\infty }(M_{+})}
\end{equation*}
appears \cite[pp.155-157]{Waldhausenmfd}.

First consider the stabilization maps  $$\Sigma:\Map(Y/\partial Y, S^{L})\rightarrow \Map(Y\times J/\partial(Y\times J), S^{L+1})$$
induced by the homeomorphism $Y\times J/\partial(Y\times J)\approx Y\wedge J/\partial J$ and the structural map $S^{L}\wedge J/\partial J\rightarrow S^{L+1}$, where $L=l+m$ and $Y=M\times J^{L-d}$. Let $$r_{\ast}:\Map(Y\times J/\partial(Y\times J), S^{L+1})\rightarrow \Map(Y\times J/\partial(Y\times J), S^{L+1})$$ be the map induced by the reflection $r:S^{L+1}\rightarrow S^{L+1}:(x_0, \cdots, x_{L}, x_{L+1})\mapsto (x_0, \cdots, x_{L}, -x_{L+1})$. Denote $M\times J^{l+m-d}/\partial(M\times J^{l+m-d})$ by $M\times J^{l+m-d}/\partial$. Then identify $\Omega ^{\infty
}\Sigma ^{\infty }(M_{+})$ with $\underset{l,m}{\underrightarrow{\lim}}\,\Map(M\times J^{l+m-d}/\partial,S^{l+m})$ (up to homotopy) via Poincar\'e duality, where the direct limit system is given by $\Sigma$ and $r_*\circ \Sigma$.

In \cite[p.155,p.183]{Waldhausenmfd}, Waldhausen defines the space $Q^{d}$ of germs of normally oriented $d-$planes in $\mathbb{R}^{d+1}$ and gives an explicit homotopy equivalence $Q^d\simeq S^d$. Moreover, he gives a bundle map between the trivial bundles over the $N-$skeleton of $\left\vert\Dd\right\vert$ with fibers $M\times [a,b]$ and $Q^d$ where $N=\min\{m,l\}-2$ (c.f. \cite[p.156]{Waldhausenmfd}). This bundle map combined with the homotopy equivalence $Q^d\simeq S^d$ induces a bundle map $\Theta$ between the trivial bundles over the $N-$skeleton of $\left\vert\Dd\right\vert$ with fibers $M\times [a,b]$ and $S^d$, where the bundle map $\Theta$ is trivial near $\partial M\times [a,b]$. In particular, over each $0-$simplex $(M_1,F,M_2)$ of $\left\vert\Dd\right\vert$, the bundle map $\Theta$ restricts to a map
$$M\times [a,b]\rightarrow S^d$$
which extends the Gauss map $F\rightarrow S^d$ for the embedding $F\subset M\times [a,b]\subset\mathbb{R}^{d+1}$.

By restricting $M\times [a,b]$ to $M\times t$ for $t\in[a,b]$, the bundle map $\Theta$ provides a continuous family of maps over the $N-$skeleton
$$\Theta_t: \left\vert\Dd\right\vert\rightarrow \Map(M/\partial M, S^d).$$
In particular $\Theta_a$ is homotopic to $\Theta_b$. More generally, a continuous family of maps from $\left\vert\D\right\vert$ to $\Map(M\times J^{l+m-d}/\partial(M\times J^{l+m-d}), S^{l+m})$ is defined over the $N-$skeleton similarly and also denoted by $\Theta_t$ for $t\in[a,b]$.
The map $\theta$ we are looking for will essentially be the map obtained from  $\Theta_a$ (i.e. when $t=a$) after passing to limits (w.r.t. $l$ and $m$). To be more precise, note that  $\Sigma\circ \Theta_a\sim\Theta_a\circ \sigma_l$ \cite[p.156]{Waldhausenmfd}.
On the other hand we will show in Lemma \ref{lemma:wl} below, that the diagram
\begin{equation}\label{eq:Sigma-r-stab}
\xymatrix{\left\vert \underline{\calP}%
_{k}^{l,m}(M\times J^{l+m-d})\right\vert\ar[r]^-{\Theta_a}\ar[d]^-{\sigma_u}&\Map(M\times J^{l+m-d}/\partial,S^{l+m})\ar[d]^-{r_{\ast}\circ\Sigma}\\ \left\vert \underline{\calP}%
_{k}^{l,m+1}(M\times J^{l+m+1-d})\right\vert\ar[r]^-{\Theta_{a}}&\Map(M\times J^{l+m+1-d}/\partial,S^{l+m+1})}
\end{equation}
commutes. Thus we obtain the desired map\footnote{Since $\pi_{1}\left\vert \underset{\overrightarrow{k,l,m}}{\lim }\underline{\calP}_{k}^{l,m}(M\times J^{l+m-d})\right\vert\cong \pi_{1}^{s}(M_+)$ is an abelian group, then $$h^{DIFF}(M)=\left\vert \underset{\overrightarrow{k,l,m}}{\lim }\underline{\calP}
_{k}^{l,m}(M\times J^{l+m-d})\right\vert^{+}=\left\vert \underset{\overrightarrow{k,l,m}}{\lim }\underline{\calP}
_{k}^{l,m}(M\times J^{l+m-d})\right\vert.$$}
$$h^{DIFF}(M)=\left\vert\underset{\overrightarrow{k,l,m}}{\lim }\underline{\calP}%
_{k}^{l,m}(M\times J^{l+m-d})\right\vert\rightarrow\underset{l,m}{\underrightarrow{\lim}}\,\Map(M\times J^{l+m-d}/\partial,S^{l+m})\simeq\Omega^{\infty}
\Sigma^{\infty}(M_+)$$
The commutativity of the diagram (\ref{eq:thetadiag}) follows directly from Lemmas \ref{lemma:tran} and \ref{lemma:inter}, which we prove below separately.
\end{proof}

\begin{lemma}\label{lemma:tran}The diagram
\begin{equation}\xymatrix{\left\vert \underline{\calP}%
_{k}^{l,m}(M\times J^{l+m-d})\right\vert\ar[r]^-{\Theta_a}\ar[d]^-{\mathscr{T}}&\Map(M\times J^{l+m-d}/\partial,S^{l+m})\ar[d]^-{(r\circ\rho)_{\ast}}\\ \left\vert \underline{\calP}%
_{k}^{m,l}(M\times J^{l+m-d})\right\vert\ar[r]^-{\Theta_{a}}&\Map(M\times J^{l+m-d}/\partial,S^{l+m})}
\end{equation}
commutes up to homotopy where $\rho:S^{l+m}\rightarrow S^{l+m}$ is the antipodal map and $r:S^{l+m}\rightarrow S^{l+m}$ is the reflection given by $(x_0, \cdots, x_{l+m-1}, x_{l+m})\mapsto (x_0, \cdots, x_{l+m-1}, -x_{l+m})$.
\end{lemma}
\begin{proof}
To ease the notation, we only show the case when $l+m=d$. Let $\lambda$ be the tautological bundle over $\left\vert \underline{\calP}%
_{k}^{l,m}(M)\right\vert$ with fiber $F$ over each partition $(M_1, F, M_2)$. Then $\lambda$  can be viewed as a subbundle of the trivial $M\times [a,b]$-bundle over $\left\vert \underline{\calP}%
_{k}^{l,m}(M)\right\vert$. Since $\Theta$ restricts to the fiberwise Gauss map on $\lambda$, then the diagram
\begin{equation}\label{diag:bundle}\xymatrix{\left\vert \underline{\calP}%
_{k}^{l,m}(M)\right\vert\times M\times [a,b]\ar[r]^-{\Theta}\ar[d]^-{\bar{\mathscr{T}}}&\left\vert \underline{\calP}%
_{k}^{l,m}(M)\right\vert\times S^d\ar[d]^-{\mathscr{T}\times(r\circ\rho)}\\ \left\vert \underline{\calP}%
_{k}^{m,l}(M)\right\vert\times M\times [a,b]\ar[r]^-{\Theta}&\left\vert \underline{\calP}%
_{k}^{m,l}(M)\right\vert\times S^d}
\end{equation}
commutes when restricted on $\lambda$ and hence, by an obstruction theory argument, the diagram \label{diag:bundle} commutes up to fiberwise homotopy relative to $\partial M\times[a,b]$ over the $N-$skeleton, where $\bar{\mathscr{T}}$ is the bundle map covering $\mathscr{T}$ and flipping each fiber. Restricting to $\left\vert \underline{\calP}%
_{k}^{l,m}(M)\right\vert\times M\times a$, the commutativity of this diagram implies that $(r\circ\rho)_{\ast}\circ\Theta_a$ is homotopic to $\Theta_b\circ\mathscr{T}$. The conclusion of the lemma follows since $\Theta_a$ is homotopic to $\Theta_b$.
\end{proof}
\begin{lemma} The map\label{lemma:inter}
$(r\circ\rho)_{\ast}$ induces the multiplication by $(-1)^d$ on $\pi_{\ast}\underset{\overrightarrow{l,m}}{\lim }\Map(M\times J^{l+m-d}/\partial,S^{l+m})$
\end{lemma}
\begin{proof}
First note that since $r\circ\rho:S^{d+2n}\rightarrow S^{d+2n}$ has degree $(-1)^d$, it follows that $(r\circ\rho)_*$ induces the multiplication by $(-1)^d$ on $\pi_{i}\Map(M\times J^{2n}/\bd,S^{d+2n})=\pi^{d+2n}(\Sigma^{i+2n}(M_{+}))$ for $n\gg0$ \cite[p.211, Theorem 5.2]{Hu}.
The result follows by noticing that
$\underset{\overrightarrow{l,m}}{\lim }\Map(M\times J^{l+m-d}/\partial,S^{l+m})$ is identified with the mapping telescope of the sequence of maps
\begin{equation}
\Map(M,S^d)\xrightarrow{(r_*\circ\Sigma)\circ\Sigma}
\Map(M\times J^2/\bd, S^{d+2})\xrightarrow{(r_*\circ\Sigma)\circ\Sigma}\cdots\end{equation}
\end{proof}
\begin{lemma}\label{lemma:wl}
The diagram
(\ref{eq:Sigma-r-stab})
commutes up to homotopy, where $\Theta_a$ is defined in the stable range and $r_*$ is induced by the reflection $r:S^{l+m+1}\rightarrow S^{l+m+1}, (x_0,\cdots,x_{l+m},x_{l+m+1})\mapsto(x_0,\cdots,x_{l+m},-x_{l+m+1})$
\end{lemma}
\begin{proof}
Combining the fact that $\Sigma\circ \Theta_a\sim\Theta_a\circ \sigma_l$ \cite[p.156]{Waldhausenmfd}, that $\sigma_l\circ\mathscr{T}\sim \mathscr{T}\circ\sigma_u$, and Lemma \ref{lemma:tran}, one has $(r\circ\rho)_*\circ\Theta_a\circ\sigma_u\sim\Sigma\circ(r'\circ\rho')_*\circ\Theta_a$ where $\rho,\rho'$ are the antipodal maps of $S^{l+m+1}$ and $S^{l+m}$, respectively, and $r':S^{l+m}\rightarrow S^{l+m}$ is the reflection given by $(x_0,\cdots,x_{l+m-1},x_{l+m})\mapsto(x_0,\cdots,x_{l+m-1},-x_{l+m})$. Since $(r\circ\rho)_*\circ\Sigma\circ(r'\circ\rho')_*\sim r_*\circ\Sigma$, the result follows.
\end{proof}

\begin{proof}[\textbf{Proof of Proposition \ref{equivariantshortexact}}]
By Lemmas \ref{difference-1} and \ref{pre-equivariantshortexact}, the statement (2) holds and so does the statement (1) in the case that $M$ is a codimension $0$ submanifold in $\mathbb{R}^{d}$. It remains to show the statement (2) in the case when $M$ is not a codimension $0$ submanifold in $\mathbb{R}^{d}$. By Whitney's embedding theorem, the manifold $M$ can be embedded into $\mathbb{R}^{d+N}$ for $N>>0$. Let $N(M)$ be a regular tubular neighborhood of $M$ in $\mathbb{R}^{d+N}$. We have the following commutative diagram
\begin{equation}\label{eq:ASA}
\begin{array}{ccc}
A^{S}(M) & \rightarrow  & A(M) \\
\downarrow  &  & \downarrow \eta  \\
A^{S}(N(M)) & \rightarrow  & A(N(M))%
\end{array}%
\end{equation}
with $A^{S}(Y)=\underset{\overrightarrow{k,l,m}}{\lim }\underline{\calP}%
_{k}^{l,m}(Y\times J^{l+m-\dim Y})$,
and the
horizontal maps are induced from the natural inclusions and the vertical
maps are essentially defined
by
\begin{equation*}
(M,F,N)\mapsto (p^{-1}[M],p^{-1}[F],p^{-1}[N])
\end{equation*}
(with a technical modification, c.f. \cite[p.175]{Waldhausenmfd}) where $p=\pi\times id_{I}:N(X)\times I\rightarrow X\times I$ with $\pi$ the normal bundle projection. By \cite[Proposition 5.4]{Waldhausenmfd}, the map $\eta$ can be identified (up to homotopy) with the map on $K$-theories induced by the functor $R_{hf}(M)\rightarrow R_{hf}(N(M))$ given by pushout with the inclusion $M\to N(M)$. Since the inclusion $M\to N(M)$ is a homotopy equivalence, the map $\eta$ is also an equivalence. Moreover, as a consequence of the naturality of $\tau_{\varepsilon}$ \cite[Proposition 1.17]{Vogelllong}, the map $\eta$ is
compatible with the involution $\tau_{\varepsilon}$. Since $N(M)$ is a codimension $0$ submanifold of $\mathbb{R}^{N}$, the bottom horizontal map in the diagram (\ref{eq:ASA}) induces the homomorphisms $\overline{\alpha}: \pi_{\ast}A^S(N(M))\rightarrow \pi_{\ast}A(N(M))$ satisfying $\tau_{\varepsilon}\circ\overline{\alpha}
=\overline{\alpha}$ after tensoring with $\Q$. These facts, together with the commutative diagram (\ref{eq:ASA}) imply that the statement (1) holds for general $M$ . This completes the proof.
\end{proof}

\begin{proof}[\textbf{Proof of Theorem \ref{formula}}]
The theorem follows by putting together Theorem \ref{AthS1}, Proposition \ref{equivariantshortexact}
and the following calculations of Waldhausen \cite[P.48]{WaldhausenI1978} and Farrell-Hsiang \cite{FH}
$$\Inv_{\tau_{\varepsilon\ast}}^{+}\pi_{i+2}^{\mathbb{Q}}A(\ast)=\Inv_{\tau_{BF\ast}}^{+}K_{i+2}(\mathbb{Z})\otimes%
\mathbb{Q}=0$$ for all $i\geq 0$ and
$$\dim \Inv_{\tau_{\varepsilon\ast}}^{-}\pi_{i+2}^{\mathbb{Q}}A(\ast)=\dim \Inv_{\tau_{BF\ast}}^{-}K_{i+2}(\mathbb{Z%
})\otimes\mathbb{Q}=\left\{
      \begin{array}{ll}
        1, & \mbox{if }i\equiv3\mod4\mbox{;} \\
        0, & \mbox{otherwise,}
      \end{array}
    \right.$$
\end{proof}

\section{Application to spaces of nonnegatively curved metrics}
Belegradek, Farrell and Kapovitch proved in \cite{BFK} that there exist simply connected complete nonnegatively curved manifolds $V$, such that for certain positive integers $i$ the rational homotopy groups $\pi^{\mathbb{%
Q}}_{i}\mathcal{R}_{K \geq 0}(V)$ are nontrivial, where $\mathcal{R}_{K \geq 0}(V)$ is the space of complete nonnegatively curved Riemannian metrics on $V$. For
example, when $d\geq2$ and for some explicit $i\geq 2$, they prove that there exists an $m$ such that
\begin{equation*}
\pi^{\mathbb{Q}}_{i}\mathcal{R}_{K \geq 0}(TS^{2d}\times S^{m})\neq0
\end{equation*}
where $TS^{2d}$ denotes the tangent bundle of $S^{2d}$. However, the methods in \cite{BFK} are insufficient to determine $m$ exactly when $i$ is given. It turns out, though, that this can be fixed by computing the ranks of $\Inv^{+}\pi_{k}^{\mathbb{Q}}\mathscr{P}(M)$ and $\Inv^{-}\pi_{k}^{\mathbb{Q}}\mathscr{P}(M)$ when $M$ is the unit sphere bundle of $S^{2d}$. For the sake of concreteness, we will
focus on the case $V=TS^{2d}\times S^{m}$. Other manifolds $V$ should be treated similarly.

For a graded $\mathbb{Q}$--vector space $A_{\ast}=\underset{i}{\oplus}A_{i}$, let $$\textbf{p}(A_{\ast})=\underset{i}{\sum}(\dim_{\mathbb{Q}}A_{i})\cdot t^{i}$$ be the Poincar\'{e} Series of $A_{\ast}$. In this section, we will compute the Poincar\'{e} Series for $\Inv^{\pm}H_{S^{1}}^{\ast }(LM;\mathbb{Q})$ and $\Inv^{\pm}\pi _{\ast }^{\mathbb{Q}}\mathscr{P}(M)$. We begin with the following lemma.

\begin{lemma}\label{lemma:HS1}
Let $X$ be a finite complex and let $H^{*}_{S^{1}}(LX;\mathbb{Q}):=H^{*}(ES^{1}\times_{S^{1}}LX;\mathbb{Q})$. Then
\begin{enumerate}
\item[(1)]$\textbf{p}(\Inv^{\pm}H_{*}^{S^{1}}(LX,\ast;\mathbb{Q}))=\textbf{p}(\Inv^{\pm}H_{*}^{S^{1}}(LX;\mathbb{Q}))-\textbf{p}(\Inv^{\pm}H_{*}^{S^{1}}(\ast;\mathbb{Q}))$
\item[(2)]$\Inv^{\pm}H_{*}^{S^{1}}(LX;\mathbb{Q})=\Inv^{\pm}H^{*}_{S^{1}}(LX;\mathbb{Q})$
\item[(3)]$\textbf{p}(\Inv^{+}H_{*}^{S^{1}}(\ast;\mathbb{Q}))=\frac{1}{1-t^{4}}$ and $\textbf{p}(\Inv^{-}H_{*}^{S^{1}}(\ast;\mathbb{Q}))=\frac{t^{2}}{1-t^{4}}$
\end{enumerate}
\end{lemma}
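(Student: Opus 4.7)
The plan is to prove the three parts in sequence; the unifying observation is that the constant-loop inclusion $\ast\to LX$ is $S^{1}$-equivariant (the $S^{1}$-action on $LX$ fixes constant loops) and is a section of the evident projection $LX\to\ast$, so the Borel construction delivers a retraction $ES^{1}\times_{S^{1}}LX\to BS^{1}$ sectioned by $BS^{1}\to ES^{1}\times_{S^{1}}LX$.

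For part (1), I would take the split short exact sequence of the pair $(ES^{1}\times_{S^{1}}LX,\,ES^{1}\times_{S^{1}}\ast)$ coming from this retraction, namely
\[
0\to H_{*}^{S^{1}}(LX,\ast;\mathbb{Q})\to H_{*}^{S^{1}}(LX;\mathbb{Q})\to H_{*}^{S^{1}}(\ast;\mathbb{Q})\to 0,
\]
and observe that every arrow is $T_{\ast}$-equivariant: $T$ restricted to $ES^{1}\times_{S^{1}}\ast$ is $[e]\mapsto[\bar e]$ (because $\overline{c_{x_{0}}}=c_{x_{0}}$ for the constant loop), which is precisely the canonical involution on $BS^{1}$. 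Passing to $(\pm1)$-eigenspaces preserves the splitting and yields the desired Poincar\'e-series identity.

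For part (2), I would first note that each $H_{k}^{S^{1}}(LX;\mathbb{Q})$ is finite-dimensional (this is the case of interest, where $X$ is a finite simply-connected complex, via the Serre spectral sequences of $\Omega X\to LX\to X$ and of the Borel fibration $LX\to ES^{1}\times_{S^{1}}LX\to BS^{1}$). Over $\mathbb{Q}$, the universal coefficient theorem identifies $H^{k}_{S^{1}}(LX;\mathbb{Q})$ with the linear dual of $H_{k}^{S^{1}}(LX;\mathbb{Q})$, and $T^{\ast}$ is the transpose of $T_{\ast}$; since for any finite-dimensional $\mathbb{Q}$-vector space with involution the $(\pm1)$-eigenspaces of the involution and of its transpose on the dual have equal dimension, (2) follows degree by degree.

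For part (3), I would model $BS^{1}$ as $\mathbb{C}P^{\infty}$ with $T$ acting by coordinate-wise complex conjugation $[z_{0}:z_{1}:\cdots]\mapsto[\bar z_{0}:\bar z_{1}:\cdots]$. Since $H^{\ast}(\mathbb{C}P^{\infty};\mathbb{Q})=\mathbb{Q}[x]$ with $|x|=2$, and complex conjugation on $\mathbb{C}P^{1}\cong S^{2}$ is orientation-reversing, one has $T^{\ast}(x)=-x$ and hence $T^{\ast}(x^{k})=(-1)^{k}x^{k}$. Summing gives $\textbf{p}(Inv^{+}H^{\ast}_{S^{1}}(\ast;\mathbb{Q}))=1/(1-t^{4})$ and $\textbf{p}(Inv^{-}H^{\ast}_{S^{1}}(\ast;\mathbb{Q}))=t^{2}/(1-t^{4})$, and part (2) transfers the same series to homology. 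The only delicate point in the whole argument is the sign $T^{\ast}(x)=-x$, but this is immediate from the orientation-reversal of $S^{2}$ under complex conjugation, so no real obstacle is expected.
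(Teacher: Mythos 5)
Your proposal is correct and follows the same overall skeleton as the paper: part (1) comes from the equivariant retraction induced by the constant map and the resulting $T_{\ast}$-compatible splitting of the pair sequence, part (2) is the universal coefficient theorem, and part (3) reduces to computing the sign of the involution on the degree-$2$ generator of $H^{\ast}(BS^{1};\mathbb{Q})$. The one place you genuinely diverge is part (3): the paper simply cites Klein--Schwänzl (``THEOREM 3.3'' of [KS]) for $\alpha\mapsto-\alpha$, whereas you give a self-contained geometric argument modeling $BS^{1}=\mathbb{C}P^{\infty}$, checking that the paper's $[e]\mapsto[\bar e]$ is coordinate-wise conjugation, and reading off $T^{\ast}(x)=-x$ from orientation reversal on $\mathbb{C}P^{1}$. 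That is a cleaner route for the reader who does not want to chase the reference, and it is correct since the paper's explicit model $ES^{1}=S^{\infty}\subset\mathbb{C}^{\infty}$ does identify $T$ on $BS^{1}$ with coordinate-wise conjugation on $\mathbb{C}P^{\infty}$. You also flag, more carefully than the paper does, that (2) as an equality of Poincar\'e series really needs $H_{k}^{S^{1}}(LX;\mathbb{Q})$ finite-dimensional in each degree (e.g. $X$ finite and simply connected); this is indeed the situation in which the lemma is applied, and is a reasonable implicit hypothesis, so it is a point of care rather than a gap.
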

\begin{proof}The constant map $X\rightarrow \ast$ induces a retraction $ES^{1}\times_{S^{1}}LX\rightarrow ES^{1}\times_{S^{1}}\ast$ which is compatible with the involution $T$, hence induces the decomposition $$H_{\ast}^{S^{1}}(LX;\mathbb{Q})\cong H_{\ast}^{S^{1}}(LX,\ast;\mathbb{Q})\oplus H_{\ast}^{S^{1}}(\ast;\mathbb{Q}) $$ which is compatible with the involution $T_{\ast}$ as well and so this implies (1).
Formula (2) follows from the Universal Coefficient theorem.
Since $H^{*}_{S^{1}}(\ast;\mathbb{Q})=H^{*}(BS^{1};\mathbb{Q})=\mathbb{Q}[\alpha]$ with $\dim \alpha=2$ and the involution $T_{\ast}$ is given by $\alpha \mapsto -\alpha$ \cite[Theorem 3.3]{KS}, then (2) implies (3).

\end{proof}
Let $\Lambda (x_{1},\dots, x_{l})$ denote the free graded algebra over $\mathbb{Q}$ generated by $x_{1},\dots, x_{l}$; this algebra is the tensor product of the polynomial algebra generated by the even dimensional generators and the exterior algebra generated by the odd dimensional generators.
\begin{example}\label{example:compute sphere}
  Let $M=S(TS^{2d})$: the unit tangent bundle of $S^{2d}$ for $%
d\geq 2$. The rational cohomology ring for $M$
is the exterior algebra $\Lambda (a)$ on one generator $a$ with $\dim a=4d-1$%
. Then the minimal model for $M$ is $(\Lambda (x),\delta)$ where differential $\delta=0$ and $\dim x=4d-1 $. By \cite{VPB},
the minimal
model for the space $ES^{1}\times _{S^{1}}LM$ is $(\Lambda (\alpha ,x,\bar{x}),D) $, $\dim \alpha =2$, $\dim x=4d-1$, $\dim \bar{x}=4d-2$ with differential $%
D\alpha =0$, $D\bar{x}=0$ and $Dx=\alpha \bar{x}$. It is not hard to check
that $H_{S^{1}}^{\ast }(LM;\mathbb{Q})$ is
generated freely as vector space by $\alpha ^{k},\bar{x}^{l}$ for all nonnegative integers $k,l$.
As the involutions on $H_{S^{1}}^{\ast }(LM;\mathbb{Q})$ is given by $%
\alpha \mapsto -\alpha ,\ \bar{x}\mapsto -\bar{x}$ \cite[Theorem 3.3]{KS}, then
\begin{equation*}
\textbf{p}(H_{S^{1}}^{\ast }(LM))=\frac{1}{1-t^{2}}+\frac{t^{4d-2}}{1-t^{4d-2}}
\end{equation*}%
\begin{equation*}
\textbf{p}(\Inv^{+}H_{S^{1}}^{\ast }(LM))=\frac{1}{1-t^{4}}+\frac{t^{8d-4}}{1-t^{8d-4}}
\end{equation*}%
\begin{equation*}
\textbf{p}(\Inv^{-}H_{S^{1}}^{\ast }(LM))=\frac{t^{2}}{1-t^{4}}+\frac{t^{4d-2}}{1-t^{8d-4}%
}
\end{equation*}
By the Theorem \ref{formula} and Lemma \ref{lemma:HS1}, one obtains
\begin{eqnarray*}
\textbf{p}(\Inv^{+}\pi _{\ast }^{\mathbb{Q}}\mathscr{P}(M)) &=&\frac{t^{3}}{1-t^{4}}+%
\frac{t^{8d-5}}{1-t^{8d-4}} \\
\textbf{p}(\Inv^{-}\pi _{\ast }^{\mathbb{Q}}\mathscr{P}(M)) &=&\frac{t^{12d-7}}{1-t^{8d-4}%
}
\end{eqnarray*}
\end{example}
For a compact smooth manifold $N$ let $P'(N)$ be the topological group of the diffeomorphisms of $N\times [0,1]$ that are identity on a neighborhood of $N\times\{0\}\cup\partial N\times [0,1]$. Assume further that $\partial N\neq\emptyset$ and identify $\partial N\times [0,1]$ with a fixed collar neighborhood of $N$. Define $\iota_{N}:P'(\partial N)\rightarrow \Diff(N)$ to be the map that extends every $f\in P'(\partial N)$ from the collar $\partial N\times [0,1]$ to a diffeomorphism of $N$ by taking the identity outside the collar. It follows from \cite[Theorems 1.1 and 1.2]{BFK} that $\pi^{\mathbb{Q}}_{i+1}\mathcal{R}_{K \geq 0}(TS^{2d} \times S^{m})\neq0$ if $\ker\pi_{i}^{\mathbb{Q}}(\iota_{E\times
S^{m}})\neq0$ where $d\geq2$ and $E$ is the associated disk bundle of $TS^{2d}$. We will find a condition for $\ker\pi_{i}^{%
\mathbb{Q}}(\iota_{E\times S^{m}})\neq0$ in terms of the positive and negative eigenspaces of the involution on $\pi_{\ast}\mathscr{P}(\partial E)$. For this, we restate \cite[Lemma 9.4]{BFK} as follows.

\begin{lemma}\label{lemma:restate}
Let $E$ be a
compact manifold, $m,i$ be integers such that $m\geq0$, $i\geq1$, $\dim\partial E+m\geq \max\{3i+7,2i+9\}$ and
\begin{equation*}
  \frac{\dim\pi_{i}^{\mathbb{Q}}\mathscr{P}(\partial E)}{2}\leq\left\{
  \begin{array}{ll}
    \dim \Inv^{+}\pi_{i}^{\mathbb{Q}}\mathscr{P}(\partial E), & \hbox{if $\dim\partial
E+m\equiv0\mod2$;} \\
    \dim \Inv^{-}\pi_{i}^{\mathbb{Q}}\mathscr{P}(\partial E), & \hbox{otherwise.}
  \end{array}
\right.
\end{equation*}
Then
\begin{equation*}
\dim \ker\pi_{i}^{\mathbb{Q}}(\iota_{E\times S^{m}})\geq\frac{\dim\pi_{i}^{\mathbb{Q}}\mathscr{P}(\partial E)}{2}-\dim\pi_{i}^{\mathbb{Q}%
}\Diff(E\times D^{m},\partial)
\end{equation*}

\end{lemma}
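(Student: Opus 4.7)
The plan is to follow and adapt the argument of \cite[Lemma 9.4]{BFK}, now exploiting the explicit description of the involution on $\pi_{\ast}^{\mathbb{Q}}\mathscr{P}(\partial E)$ provided by Theorem \ref{formula} and Section \ref{InvPseudo}. The hypothesis $\dim\partial E+m\geq\max\{3i+7,2i+9\}$ puts us inside Igusa's stable range, so the various stabilization maps between $P$, $P'$, and $\mathscr{P}$ are isomorphisms on $\pi_{i}$. Combining Igusa's stabilization with extension-by-identity along the equatorial inclusion $D^m\hookrightarrow S^m$ yields a rational injection
$$\sigma_{S^m}:\pi^{\mathbb{Q}}_{i}\mathscr{P}(\partial E)\longrightarrow\pi^{\mathbb{Q}}_{i}P'(\partial E\times S^m),$$
and I will study $\sigma_{S^m}$ and its interaction with $\iota_{E\times S^m}$.

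First I would analyse the composite
$$\Phi\;:=\;(\iota_{E\times S^m})_{\ast}\circ\sigma_{S^m}\;:\;\pi^{\mathbb{Q}}_{i}\mathscr{P}(\partial E)\longrightarrow\pi^{\mathbb{Q}}_{i}Diff(E\times S^m).$$
Writing $S^m=D^m_{+}\cup_{S^{m-1}}D^m_{-}$ realizes $E\times S^m$ as the double of $E\times D^m$, and the extension $\iota_{E\times S^m}(\sigma_{S^m}[f])$ splits naturally as the gluing of two hemispheric extensions, each lying in $Diff(E\times D^m,\partial)$. Comparing these two hemispheric extensions through the reflection $D^m_{+}\leftrightarrow D^m_{-}$ and the explicit formula for $\iota$ recalled at the beginning of Section \ref{InvPseudo}, I expect to prove that they differ by the action of $(-1)^{\dim\partial E+m}\iota^{S}_{\ast}$ on $\pi^{\mathbb{Q}}_{i}\mathscr{P}(\partial E)$: the reflection of $S^m$ is orientation-reversing precisely when $m$ is even, and this must be combined with the factor $(-1)^{\dim\partial E}$ already built into the definition of $\iota^{S}_{\ast}$.

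Second, writing $\varepsilon=(-1)^{\dim\partial E+m}$ and $V^{\varepsilon}=Inv^{\varepsilon}\pi^{\mathbb{Q}}_{i}\mathscr{P}(\partial E)$, on $V^{\varepsilon}$ the two hemispheric extensions agree (up to a factor $2$, harmless rationally), so $\Phi\vert_{V^{\varepsilon}}$ factors through the doubling homomorphism
$$\pi^{\mathbb{Q}}_{i}Diff(E\times D^m,\partial)\longrightarrow\pi^{\mathbb{Q}}_{i}Diff(E\times S^m).$$
Consequently $\dim\Phi(V^{\varepsilon})\leq\dim\pi^{\mathbb{Q}}_{i}Diff(E\times D^m,\partial)$, and combined with the hypothesis $\dim V^{\varepsilon}\geq\tfrac{1}{2}\dim\pi^{\mathbb{Q}}_{i}\mathscr{P}(\partial E)$ we obtain
$$\dim\ker\Phi\vert_{V^{\varepsilon}}\;\geq\;\tfrac{1}{2}\dim\pi^{\mathbb{Q}}_{i}\mathscr{P}(\partial E)-\dim\pi^{\mathbb{Q}}_{i}Diff(E\times D^m,\partial).$$
Since $\sigma_{S^m}$ is injective and $\Phi=(\iota_{E\times S^m})_{\ast}\circ\sigma_{S^m}$, this many linearly independent classes land inside $\ker(\iota_{E\times S^m})_{\ast}$, yielding the asserted inequality.

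The main obstacle is the first step: rigorously identifying the discrepancy between the two hemispheric extensions with the twist by $(-1)^{\dim\partial E+m}\iota^{S}_{\ast}$. This is a purely geometric computation with collars, reflections of $S^m$, and gluings, carried out from the explicit formula for $\iota$ given at the beginning of Section \ref{InvPseudo}; it is also the step at which the parity condition in the statement and the choice of the favourable eigenspace are forced. Once this sign is correctly identified, the dimension count in the last paragraph is routine.
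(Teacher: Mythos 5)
Your overall strategy — adapting BFK's Lemma 9.4 and exploiting the eigenspace hypothesis to manufacture kernel classes — is pointed in the right direction, but the geometric setup contains two outright errors and a gap that you yourself flag as unproved.

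First, $E\times S^m$ is \emph{not} the double of $E\times D^m$. The double is obtained by gluing two copies of $E\times D^m$ along the \emph{entire} boundary $\partial(E\times D^m)=(\partial E\times D^m)\cup(E\times S^{m-1})$, whereas $E\times D^m_{+}$ and $E\times D^m_{-}$ meet inside $E\times S^m$ only along $E\times S^{m-1}$. As a result there is no ``doubling homomorphism'' of the type your dimension count needs. Worse, the diffeomorphism $\iota_{E\times S^m}(\sigma_{S^m}[f])$ is supported in a collar of $\partial E\times D^m_{+}$ and is \emph{not} the identity on $\partial E\times D^m_{+}$; so it does not lie in (the image of) $Diff(E\times D^m,\partial)\hookrightarrow Diff(E\times S^m)$ in the way your factorization claims.

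Second, your heuristic for the sign $(-1)^{\dim\partial E+m}$ is wrong: a reflection of $S^m$ across an equatorial hyperplane is orientation-reversing for \emph{every} $m$, not only for $m$ even. The factor $(-1)^m$ really comes from the fact that the Igusa stabilization map anti-commutes with the canonical involution $\iota_{\ast}$ on pseudoisotopy groups, so after enough stabilizations $\iota_{\ast}$ on $\pi_{i}P(\partial E\times D^m)$ compares to $\iota^{S}_{\ast}$ on $\pi_{i}\mathscr{P}(\partial E)$ through $(-1)^{\dim\partial E+m}$; this is exactly the commutative square recorded at the start of Section \ref{InvPseudo}.

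Finally, the step you identify as ``the main obstacle'' — identifying the discrepancy of the two hemispheric extensions with $(-1)^{\dim\partial E+m}\iota^{S}_{\ast}$ — is where all the content of the lemma sits, and the two issues above mean it will not go through in the form stated. The paper does not re-derive BFK's Lemma 9.4. Instead it works with the symmetrization $\eta_{i}(x)=x+\iota_{\ast}(x)$ on $\pi_{i}^{\mathbb{Q}}P(\partial E\times D^m)$, observes that $\dim\mathrm{Im}\,\eta_{i}=\dim Inv^{+}\pi_{i}^{\mathbb{Q}}P(\partial E\times D^m)$, and then uses the sign relation between $\iota_{\ast}$ and $\iota^{S}_{\ast}$ to convert the eigenspace hypothesis into the inequality $\dim\mathrm{Im}\,\eta_{i}\geq\tfrac{1}{2}\dim\pi_{i}^{\mathbb{Q}}\mathscr{P}(\partial E)$; it then uses the weak equivalence $P'\to P$ and the argument in \cite[p.11]{BFK} to pass to $\dim\mathrm{Im}\,\pi_{i}^{\mathbb{Q}}(j)$ for the inclusion $j\colon P'_{\partial}(\partial E\times D^m)\to P'(\partial E\times D^m)$, after which one repeats the proof of \cite[Lemma 9.4]{BFK} verbatim. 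The real work here is this algebraic translation via $\eta_{i}$ and the stabilization sign, not a fresh geometric decomposition of $E\times S^m$.
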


\begin{proof}Let $\eta_{i}:\pi_{i}^{\mathbb{Q}}P(\partial E\times D^{m})\rightarrow \pi_{i}^{\mathbb{Q}}P(\partial E\times D^{m})$ be given by $$\eta_{i}(x)=x+\iota_{\ast}(x)$$ with $\iota$ the involution on $P(\partial E\times D^{m})$. Since $\ker\eta_{i}=\Inv^{-}\pi_{i}^{\mathbb{Q}}P(\partial E\times D^{m})$ and $$\dim \pi_{i}^{\mathbb{Q}}P(\partial E\times D^{m})=\dim \mbox{Im}\eta_{i}+\dim \ker\eta_{i},$$ then by assumption one gets
\begin{equation}\label{imeta}
  \dim \mbox{Im}\eta_{i}=\dim \Inv^{+}\pi_{i}^{\mathbb{Q}}P(\partial E\times D^{m})\geq \frac{\dim\pi_{i}^{\mathbb{Q}}\mathscr{P}(\partial E)}{2}
\end{equation}
Since the inclusion $P'(M)\rightarrow P(M)$ is a weak homotopy equivalence (c.f.\cite[Chapter 1, Proposition 1.3]{Igusa88}), then the arguments in \cite[p.11]{BFK} imply that the image of the $\pi_{i}^{\mathbb{Q}}$--homomorphism induced by the inclusion $j: P'_{\partial}(\partial E\times D^{m})\rightarrow P'(\partial E\times D^{m})$ has dimension not less than $\dim \mbox{Im}\eta_{i}$, hence by the inequality (\ref{imeta}) we have $$\dim\mbox{Im}\pi^{\mathbb{Q}}_{i}(j)\geq\frac{\dim\pi_{i}^{\mathbb{Q}}\mathscr{P}(\partial E)}{2}.$$ Then the same argument as in the proof of \cite[Lemma 9.4]{BFK} completes the proof.

\end{proof}

\begin{example} Let $E$ be the associated disk bundle of $TS^{2d}$. By \cite[Corollary 8.5, Proposition 9.1 and Theorem
9.11]{BFK}, a sufficient condition for $\dim\pi _{i}^{\mathbb{Q}%
}\Diff(E\times D^{m},\partial )=0$ and $\dim\pi _{i}^{\mathbb{Q}}\mathscr{P}%
(\partial E)=1$ is as follows: $$\left\{
                     \begin{array}{ll}
                       i=8d-5+(4d-2)j\ \mbox{for some odd}\ j\geq 1\\
                       3i+9<4d+m\\
                       m+i\geq 4d\\
                       m\equiv 3\ \mbox{or}\ 2d\mod4
                     \end{array}
                   \right.$$
This, together with the condition Lemma \ref{lemma:restate} gives a sufficient
condition for $\ker\pi_{i}^{\mathbb{Q}}(\iota _{E\times S^{m}}) \neq 0$,
which is
\begin{equation}\label{ex1cond}
  \left\{
  \begin{array}{l}
    i=8d-5+(4d-2)j\ \mbox{for some odd}\ j\geq 1\\
    3i+9<4d+m\\
    m+i\geq 4d\\
    m\equiv 3\ \mbox{or}\ 2d\mod4\\
     \frac{\dim\pi_{i}^{\mathbb{Q}}\mathscr{P}(\partial E)}{2}\leq\left\{
  \begin{array}{ll}
    \dim \Inv^{+}\pi_{i}^{\mathbb{Q}}\mathscr{P}(\partial E), & \hbox{if $\dim\partial
E+m\equiv0\mod2$;} \\
    \dim \Inv^{-}\pi_{i}^{\mathbb{Q}}\mathscr{P}(\partial E), & \hbox{otherwise.}
  \end{array}
\right.
  \end{array}
\right.
\end{equation}

By the calculation
of Example \ref{example:compute sphere} we get
\begin{equation*}
\textbf{p}(\Inv^{+}\pi _{\ast }^{\mathbb{Q}}\mathscr{P}(\partial E))-\textbf{p}(\Inv^{-}\pi _{\ast }^{%
\mathbb{Q}}\mathscr{P}(\partial E))=\frac{t^{3}}{1-t^{4}}+\underset{m}{\sum}(-1)^{m}t^{(4d-2)m+8d-5}
\end{equation*}
Since $(4d-2)m+8d-5\equiv 1\mod4$ when $m$ is odd, then a necessary and
sufficient condition for $\dim \Inv^{-}\pi _{i}^{\mathbb{Q}}\mathscr{P}%
(\partial E)>\dim \Inv^{+}\pi _{i}^{\mathbb{Q}}\mathscr{P}(\partial E)$ is $%
i=(4d-2)m+8d-5$ for some odd $m$. Consequently, the condition (\ref{ex1cond}) can be
simplified as

\begin{equation}\label{example:condition sphere}
\left\{
\begin{array}{l}
i=8d-5+(4d-2)j\text{ for some odd }j\geq 1 \\
m>20d-6+(12d-6)j \\
m\equiv 2d\mod4%
\end{array}%
\right.
\end{equation}
For example, when $d=2$, the first $i$ and $m$ appearing here are $(i,m)=(17,56), (29,92), (41,128)$, which give $\pi^{\mathbb{Q}}_{18}\mathcal{R}_{K \geq 0}(TS^{4} \times S^{56})\neq0,\ \pi^{\mathbb{Q}}_{30}\mathcal{R}_{K \geq 0}(TS^{4} \times S^{92})\neq0,\ \pi^{\mathbb{Q}}_{42}\mathcal{R}_{K \geq 0}(TS^{4} \times S^{128})\neq0$.
\end{example}

{\footnotesize \ }

{\footnotesize \
\bibliographystyle{alpha}
\bibliography{bib}

\newcommand{\etalchar}[1]{$^{#1}$}
\def\cprime{$'$}
\begin{thebibliography}{HKV{\etalchar{+}}02}

\bibitem[BF85]{BF2}
Dan Burghelea and Zbigniew Fiedorowicz.
\newblock Hermitian algebraic {$K$}-theory of simplicial rings and topological
  spaces.
\newblock {\em J. Math. Pures Appl. (9)}, 64(2):175--235, 1985.

\bibitem[BF86]{BF}
Dan Burghelea and Zbigniew Fiedorowicz.
\newblock Cyclic homology and the algebraic {$K$}-theory of spaces. {II}.
\newblock {\em Topology}, 25(3):303--317, 1986.

\bibitem[BFK17]{BFK}
Igor Belegradek, F.~Thomas Farrell, and Vitali Kapovitch.
\newblock Space of nonnegatively curved metrics and pseudoisotopies.
\newblock {\em J. Differential Geom.}, 105(3):345--374, 2017.

\bibitem[Bur86]{B}
Dan Burghelea.
\newblock Cyclic homology and the algebraic {$K$}-theory of spaces. {I}.
\newblock In {\em Applications of algebraic {$K$}-theory to algebraic geometry
  and number theory, {P}art {I}, {II} ({B}oulder, {C}olo., 1983)}, volume~55 of
  {\em Contemp. Math.}, pages 89--115. Amer. Math. Soc., Providence, RI, 1986.

\bibitem[Bur89]{B89}
D.~Burghelea.
\newblock The free loop space. {I}.\ {A}lgebraic topology.
\newblock In {\em Algebraic topology ({E}vanston, {IL}, 1988)}, volume~96 of
  {\em Contemp. Math.}, pages 59--85. Amer. Math. Soc., Providence, RI, 1989.

\bibitem[Dun89]{Dunn}
Gerald Dunn.
\newblock Dihedral and quaternionic homology and mapping spaces.
\newblock {\em $K$-Theory}, 3(2):141--161, 1989.

\bibitem[Dwy80]{Dwyer}
W.~G. Dwyer.
\newblock Twisted homological stability for general linear groups.
\newblock {\em Ann. of Math. (2)}, 111(2):239--251, 1980.

\bibitem[FH78]{FH}
F.~T. Farrell and W.~C. Hsiang.
\newblock On the rational homotopy groups of the diffeomorphism groups of
  discs, spheres and aspherical manifolds.
\newblock In {\em Algebraic and geometric topology ({P}roc. {S}ympos. {P}ure
  {M}ath., {S}tanford {U}niv., {S}tanford, {C}alif., 1976), {P}art 1}, Proc.
  Sympos. Pure Math., XXXII, pages 325--337. Amer. Math. Soc., Providence,
  R.I., 1978.

\bibitem[GJ09]{GJ}
Paul~G. Goerss and John~F. Jardine.
\newblock {\em Simplicial homotopy theory}.
\newblock Modern Birkh\"auser Classics. Birkh\"auser Verlag, Basel, 2009.
\newblock Reprint of the 1999 edition [MR1711612].

\bibitem[Goo85]{Goodwillie85}
Thomas~G. Goodwillie.
\newblock Cyclic homology, derivations, and the free loopspace.
\newblock {\em Topology}, 24(2):187--215, 1985.

\bibitem[Goo86]{Goodwillie86}
Thomas~G. Goodwillie.
\newblock Relative algebraic {$K$}-theory and cyclic homology.
\newblock {\em Ann. of Math. (2)}, 124(2):347--402, 1986.

\bibitem[Hat78]{Hatcher78}
A.~E. Hatcher.
\newblock Concordance spaces, higher simple-homotopy theory, and applications.
\newblock In {\em Algebraic and geometric topology ({P}roc. {S}ympos. {P}ure
  {M}ath., {S}tanford {U}niv., {S}tanford, {C}alif., 1976), {P}art 1}, Proc.
  Sympos. Pure Math., XXXII, pages 3--21. Amer. Math. Soc., Providence, R.I.,
  1978.

\bibitem[Hat02]{Hatcher}
Allen Hatcher.
\newblock {\em Algebraic topology}.
\newblock Cambridge University Press, Cambridge, 2002.

\bibitem[HKV{\etalchar{+}}02]{HKVWW}
Thomas H{\"u}ttemann, John~R. Klein, Wolrad Vogell, Friedhelm Waldhausen, and
  Bruce Williams.
\newblock The ``fundamental theorem'' for the algebraic {$K$}-theory of spaces.
  {II}. {T}he canonical involution.
\newblock {\em J. Pure Appl. Algebra}, 167(1):53--82, 2002.

\bibitem[Hu59]{Hu}
Sze-tsen Hu.
\newblock {\em Homotopy theory}.
\newblock Pure and Applied Mathematics, Vol. VIII. Academic Press, New
  York-London, 1959.

\bibitem[Igu88]{Igusa88}
Kiyoshi Igusa.
\newblock The stability theorem for smooth pseudoisotopies.
\newblock {\em $K$-Theory}, 2(1-2):vi+355, 1988.

\bibitem[Igu02]{Igusa02}
Kiyoshi Igusa.
\newblock {\em Higher {F}ranz-{R}eidemeister torsion}, volume~31 of {\em AMS/IP
  Studies in Advanced Mathematics}.
\newblock American Mathematical Society, Providence, RI; International Press,
  Somerville, MA, 2002.

\bibitem[KS88]{KS}
R.~L. Krasauskas and Yu.~P. Solov{\cprime}ev.
\newblock Rational {H}ermitian {$K$}-theory and dihedral homology.
\newblock {\em Izv. Akad. Nauk SSSR Ser. Mat.}, 52(5):935--969, 1118, 1988.

\bibitem[Lod90]{Lodder90}
Gerald~M. Lodder.
\newblock Dihedral homology and the free loop space.
\newblock {\em Proc. London Math. Soc. (3)}, 60(1):201--224, 1990.

\bibitem[Lod96]{Lodder96}
Jerry~M. Lodder.
\newblock Dihedral homology and {H}ermitian {$K$}-theory.
\newblock {\em $K$-Theory}, 10(2):175--196, 1996.

\bibitem[Qui73]{Quillen}
Daniel Quillen.
\newblock Higher algebraic {$K$}-theory. {I}.
\newblock In {\em Algebraic {$K$}-theory, {I}: {H}igher {$K$}-theories ({P}roc.
  {C}onf., {B}attelle {M}emorial {I}nst., {S}eattle, {W}ash., 1972)}, pages
  85--147. Lecture Notes in Math., Vol. 341. Springer, Berlin, 1973.

\bibitem[Vog84]{Vogellshort}
Wolrad Vogell.
\newblock The canonical involution on the algebraic {$K$}-theory of spaces.
\newblock In {\em Algebraic topology, {A}arhus 1982 ({A}arhus, 1982)}, volume
  1051 of {\em Lecture Notes in Math.}, pages 156--172. Springer, Berlin, 1984.

\bibitem[Vog85]{Vogelllong}
Wolrad Vogell.
\newblock The involution in the algebraic {$K$}-theory of spaces.
\newblock In {\em Algebraic and geometric topology ({N}ew {B}runswick,
  {N}.{J}., 1983)}, volume 1126 of {\em Lecture Notes in Math.}, pages
  277--317. Springer, Berlin, 1985.

\bibitem[VPB85]{VPB}
Micheline Vigu{\'e}-Poirrier and Dan Burghelea.
\newblock A model for cyclic homology and algebraic {$K$}-theory of
  {$1$}-connected topological spaces.
\newblock {\em J. Differential Geom.}, 22(2):243--253, 1985.

\bibitem[Wal78]{WaldhausenI1978}
Friedhelm Waldhausen.
\newblock Algebraic {$K$}-theory of topological spaces. {I}.
\newblock In {\em Algebraic and geometric topology ({P}roc. {S}ympos. {P}ure
  {M}ath., {S}tanford {U}niv., {S}tanford, {C}alif., 1976), {P}art 1}, Proc.
  Sympos. Pure Math., XXXII, pages 35--60. Amer. Math. Soc., Providence, R.I.,
  1978.

\bibitem[Wal82]{Waldhausenmfd}
Friedhelm Waldhausen.
\newblock Algebraic {$K$}-theory of spaces, a manifold approach.
\newblock In {\em Current trends in algebraic topology, {P}art 1 ({L}ondon,
  {O}nt., 1981)}, volume~2 of {\em CMS Conf. Proc.}, pages 141--184. Amer.
  Math. Soc., Providence, R.I., 1982.

\bibitem[Wal85]{Waldhausen100}
Friedhelm Waldhausen.
\newblock Algebraic {$K$}-theory of spaces.
\newblock In {\em Algebraic and geometric topology ({N}ew {B}runswick,
  {N}.{J}., 1983)}, volume 1126 of {\em Lecture Notes in Math.}, pages
  318--419. Springer, Berlin, 1985.

\bibitem[Wal87]{Waldhausen83}
Friedhelm Waldhausen.
\newblock Algebraic {$K$}-theory of spaces, concordance, and stable homotopy
  theory.
\newblock In {\em Algebraic topology and algebraic {$K$}-theory ({P}rinceton,
  {N}.{J}., 1983)}, volume 113 of {\em Ann. of Math. Stud.}, pages 392--417.
  Princeton Univ. Press, Princeton, NJ, 1987.

\bibitem[WJR13]{WJR}
Friedhelm Waldhausen, Bj{\o}rn Jahren, and John Rognes.
\newblock {\em Spaces of {PL} manifolds and categories of simple maps}, volume
  186 of {\em Annals of Mathematics Studies}.
\newblock Princeton University Press, Princeton, NJ, 2013.

\end{thebibliography}
}
\Addresses
\end{document}